\documentclass{article}

\usepackage{arxiv}

\usepackage[utf8]{inputenc} 
\usepackage[T1]{fontenc}    
\usepackage{hyperref}       
\usepackage{url}            
\usepackage{booktabs}       
\usepackage{amsfonts}       
\usepackage{nicefrac}       
\usepackage{microtype}      
\usepackage{lipsum}
\usepackage{graphicx}
\usepackage{lipsum}
\usepackage{epstopdf}
\usepackage{algorithmic}
\usepackage[numbers]{natbib}
\usepackage{bm}
\usepackage{mathtools}
\usepackage{enumitem}
\usepackage{amsthm}
\graphicspath{ {./images/} }

\newtheorem{theorem}{Theorem}
\newtheorem{lemma}{Lemma}
\newtheorem{remark}{Remark}

\newtheorem*{assumptionsg*}{Assumptions G}
\newtheorem*{assumptionsb*}{Assumptions B}
\newtheorem*{algorithmI*}{Algorithm I}

\usepackage{amsopn}
\DeclareMathOperator{\diag}{diag}

\title{Global Convergence of a Line-Search Filter Differential Dynamic Programming Method}

\author{
 Ming Xu \\
  School of Computer and Communication Sciences\\
  EPFL\\
  1015 Lasanne, Switzerland \\
  \texttt{mxu925@gmail.com} \\
  \And
  Iman Shames \\
  Department of Electrical and Electronic Engineering \\
  University of Melbourne\\
  Parkville, VIC, 3010, Australia\\
  \texttt{iman.shames@unimelb.edu.au} \\
}

\begin{document}
\maketitle
\begin{abstract}
In this article, we establish the global convergence properties of the FilterDDP algorithm, which extends the discrete-time differential dynamic programming (DDP) algorithm of Mayne and Jacobson [\emph{International Journal of Control}, 3, (1966), pp. 85-95] to handle nonlinear constraints over states and controls, in addition to the dynamics.  FilterDDP adopts a line-search filter procedure for step acceptance. However, instead of a damped Newton step applied in the general nonlinear programming setting, the computation of a trial point involves applying a backward recursion and a forward simulation. We establish the global convergence of FilterDDP by showing that for a subset of constrained optimal control problems, the this backward-forward procedure satisfies the same properties as a Newton step for the purpose of establishing global convergence of a line-search filter method, following the analysis of Wächter and Biegler [\emph{SIAM Journal on Optimization}, 16 (2005), pp. 1-31].
\end{abstract}


\section{Introduction}

The differential dynamic programming (DDP) algorithm, originally proposed by Mayne \cite{mayneddp} is a structure exploiting algorithm for solving unconstrained, discrete-time optimal control problems (OCPs). Important properties of DDP include: 1) each iterate of the algorithm satisfies the discrete-time dynamics equations, 2) each iteration is of linear time and memory with respect to horizon length and, 3) a time-varying, affine state-feedback policy is provided at each iteration instead of a control sequence. The local quadratic convergence of the DDP algorithm is established formally in Liao and Shoemaker~\cite{liaoddplocal} and Murray and Yakowitz~\cite{murrayddplocal}. Global convergence of DDP with an inexact line search was established by Yakowitz and Rutherford~\cite{yakowitzddpglobal}. Recently, Roulet et al.~\cite{rouletjmlrddp} provide an alternative proof of the local quadratic and global convergence of DDP, without a line-search procedure, for a restricted class of OCPs.

There are several extensions to the unconstrained DDP of Mayne \cite{mayneddp} for solving constrained optimal control problems, e.g.,~\cite{murraycddpwater, lantoinejotahddp, tassaclddp, altro, proxddp, aoyama2024secondorderconstraineddynamicoptimization, pavlov_ipddp, filterddp}. However, only a subset of the existing constrained DDP algorithms proposed in the literature are accompanied by a formal convergence analysis. For example, Boutselis et al.~\cite{ddpliegroup} extend DDP to the setting where states are constrained to evolve on a Lie group and establish global convergence by extending the result for unconstrained DDP from~\cite{yakowitzddpglobal}. Yakowitz \cite{yakowitzcddp} propose a DDP algorithm which handles nonlinear inequality constraints on states and controls, based on a stage-wise sequential quadratic programming formulation. Global convergence of the algorithm is established in a simplified setting, assuming a convex objective function and linear constraints. Ohno~\cite{ohnoconstrainedddp} propose a DDP algorithm which handles nonlinear equality and inequality constraints on states and controls, based on a primal-dual formulation. The local superlinear convergence of the algorithm is established, although no discussion of global convergence is provided. Similarly, Pavlov et al.~\cite{pavlov_ipddp} propose a primal-dual interior point DDP algorithm and establish the local quadratic convergence of the algorithm. Again, no global convergence analysis is provided. 

Recently, Xu et al.~\cite{filterddp} proposed a line-search filter DDP algorithm based on the algorithm proposed by W\"{a}chter and Biegler \cite{wachterglobal} for the general, nonlinear programming (NLP) case, with the damped Newton step for trial point determination replaced with a DDP style iteration. The local quadratic convergence of the algorithm was established \cite{filterddp} after ignoring the step acceptance criteria and a numerical implementation was provided. However, the global convergence analysis of FilterDDP was not established. The purpose of this article is to formally establish the global convergence of the FilterDDP algorithm, by appropriately adapting the global convergence analysis in W\"{a}chter and Biegler~\cite{wachterglobal}.

The paper is organized as follows. We first present the FilterDDP algorithm and global convergence result for constrained optimal control problems without inequality constraints for ease of comprehension. Subsequently, we will describe an extension to the barrier interior point framework for handling inequality constraints. We present our results in a similar style and structure to W\"{a}chter and Biegler~\cite{wachterglobal} for clarity.

In section \ref{sec:coc}, we state the constrained, discrete-time optimal control problems under consideration in this paper. In section \ref{sec:ddp}, we describe the FilterDDP algorithm, including the backward recursion and forward simulation phases for computing a trial point, as well as the adapted line-search filter step acceptance criteria from \cite{wachterglobal}. Subsequently, we establish the global convergence of the FilterDDP algorithm in section \ref{sec:global}, using assumptions around regularity and smoothness similar to those of \cite{wachterglobal} in the general NLP setting. Specifically, we establish that every limit point of the sequence of iterates generated by the FilterDDP algorithm is feasible, and that there is at least one limit point that satisfies the first order optimality conditions for the problem. Finally, section \ref{sec:ipmextension} presents and establishes the global convergence of a barrier interior point extension of FilterDDP.

\subsection{Notation}

We denote the $i$th component of a vector $v\in\mathbb{R}^n$ by $v^{(i)}$. Let $\|\cdot\|$ return the norm of its argument. Let $[N]$ represent the set of integers from $1$ to $N$, i.e., $\{1, \dots, N\}$. A vector of ones of appropriate size is denoted by $e$. Denote the set of indexed vectors $\{y_k\}_{k=t}^{N}$ by $y_{t:N}$ and for brevity, we use the convention $(x, u) = (x^\top, u^\top)^\top$. We use $\odot$ to denote the element-wise product (Hadamard product). For a matrix $A$, we denote by $\sigma_{\min}(A)$ the smallest singular value of $A$, and for a symmetric, positive definite matrix $A$, we call the smallest eigenvalue $\lambda_{\min}(A)$. Denote by $O(s_{k,t})$ a sequence $\{v_{k,t}\}$ satisfying $\|v_{k, t}\| \leq G s_{k, t}$ for some constant $G > 0$ independent of $k, t$. Finally, we use the following convention for derivatives: If $f$ is a scalar valued function $f : \mathbb{R}^n \times \mathbb{R}^m \rightarrow \mathbb{R}$, then $\nabla_u f(x, u) \in \mathbb{R}^{1\times m}$, i.e., a row vector and $\nabla_{xu}^2 f(x) \in \mathbb{R}^{n\times m}$. Furthermore, if $g:\mathbb{R}^n\times \mathbb{R}^m \rightarrow \mathbb{R}^c$ is a vector valued function, then $\nabla_x g(x, u) \in \mathbb{R}^{c\times n}$ and $\nabla_{xu}^2 g(x, u) \in\mathbb{R}^{c \times n \times m}$. Given a vector $\lambda\in\mathbb{R}^c$ and tensor $A \in \mathbb{R}^{c\times n \times m}$, denote a tensor contraction along the first dimension of $A$ by $\lambda \cdot A  \in \mathbb{R}^{n \times m}$.

\section{Constrained Optimal Control}\label{sec:coc}

For simplicity, we begin by describing the FilterDDP algorithm for discrete-time, finite horizon optimal control problems with only nonlinear equality constraints, given by
\begin{equation}\label{eq:coc}
\begin{array}{rl}
    \underset{x_{1:N}, u_{1:N}}{\text{minimize}}  & \sum_{t=1}^{N} \ell(x_t, u_t) \\
    \text{subject to} & x_1 = \hat{x}_1, \\
    & x_{t+1} = f(x_t, u_t) \quad \text{for } t \in [N-1],\\
    & c(x_t, u_t) = 0  \quad \text{for } t \in [N],
\end{array}
\end{equation} 
where $\mathbf{x}\coloneqq x_{1:N}$ and $\mathbf{u} \coloneqq u_{1:N}$ are a trajectory of states and control inputs,
respectively and $x_t\in\mathbb{R}^{n_x}$, $u_t\in\mathbb{R}^{n_u}$ for all $t\in[N]$. The stage objective functions are denoted by
$\ell:\mathbb{R}^{n_x} \times\mathbb{R}^{n_u} \rightarrow \mathbb{R}$ and the
 constraints are represented by mappings $c:\mathbb{R}^{n_x\times n_u} \rightarrow \mathbb{R}^{n_c}$
where $n_c \leq n_u$. The mappings $f:\mathbb{R}^{n_x} \times\mathbb{R}^{n_u} \rightarrow \mathbb{R}^{n_x}$ captures the dynamics of the system in discrete time and
$\hat{x}_1$ is the (known) initial state. We require that $\ell, f, c$ are twice continuously differentiable.

We describe an extension to further include inequality constraints and provide its associated global convergence analysis in Section~\ref{sec:ipmextension}.

\begin{remark}
    The objective, dynamics and constraint functions can in general be time-varying but we avoid specifying this explicitly for notational simplicity.
\end{remark}

\subsection{Optimality Conditions}\label{ssec:kkteq}
The Lagrangian of~\eqref{eq:coc} is given by
\begin{equation}\label{eq:constrlagrangian}
    \mathcal{L}(\mathbf{x}, \mathbf{u}, \bm{\phi}, \bm{\lambda}) \coloneqq \lambda_1^\top \left( \hat{x}_1 - x_1 \right) + \sum_{t=1}^{N} L(x_t, u_t, \phi_t) 
    + \sum_{t=1}^{N-1} \lambda_{t+1}^\top \bigl( f(x_t, u_t) - x_{t+1}\bigr),
\end{equation}
where $\bm{\phi} \coloneqq \phi_{1:N}$ and $\bm{\lambda} \coloneqq \lambda_{1:N}$ are Lagrange multipliers associated with the  equality constraints $c$ and the initial state and dynamics constraints $f$, respectively and $L(x, u, \phi) \coloneqq \ell(x, u) + \phi^\top c(x, u)$. 
The Karush-Kuhn-Tucker (KKT) conditions for OCP \eqref{eq:coc} state that for $\mathbf{x}, \mathbf{u}$ to be a (local) solution of \eqref{eq:coc}, there must exist Lagrange multipliers $\bm{\phi}$ and $\bm{\lambda}$ such that for all $t\in[N]$,
  \begin{subequations}\label{eq:kkteq}
    \begin{align}
        \nabla_{x_t}\mathcal{L}(\mathbf{x}, \mathbf{u}, \bm{\phi}, \bm{\lambda}) =& \, \nabla_{x_t} \left(L(x_t, u_t, \phi_t) + \lambda_{t+1}^\top f(x_t, u_t)\right) - \lambda_t^\top  = 0, \label{eq:constrkktx}\\
        \nabla_{u_t}\mathcal{L}(\mathbf{x}, \mathbf{u}, \bm{\phi}, \bm{\lambda}) =& \, \nabla_{u_t} L(x_t, u_t, \phi) + \lambda_{t+1}^\top \nabla_{u_t}f(x_t, u_t)  = 0, \label{eq:constrkktu}\\
      \nabla_{\lambda_t} \mathcal{L}(\mathbf{x}, \mathbf{u}, \bm{\phi}, \bm{\lambda}) =&  \, \begin{cases}
          (\hat{x}_1 - x_t)^\top = 0 & t = 1 \\
          (f(x_{t-1}, u_{t-1}) - x_{t})^\top  =  0 & t > 1
      \end{cases},\label{eq:constrkktdyn}\\
      \nabla_{\phi_t}\mathcal{L}(\mathbf{x}, \mathbf{u}, \bm{\phi}, \bm{\lambda}) =& \, c(x_t, u_t)^\top =  0,\label{eq:constrkktconstr}
    \end{align}
  \end{subequations}
noting that we implicitly set $\lambda_{N+1} = 0$ since there are no dynamics constraints for step $t=N$. We define a \emph{KKT point} to be an iterate $\mathbf{x}_k$, $\mathbf{u}_k$ for which there exist $\bm{\lambda}_k$ and $\bm{\phi}_k$ satisfying the KKT conditions~\eqref{eq:kkteq}. Under certain constraint qualifications (e.g., linear independence of the constraint gradients), the KKT conditions are the first-order optimality conditions for \eqref{eq:coc}.

\paragraph*{Dynamic feasibility} A core design principle of DDP-style algorithms is that the dynamics constraints \eqref{eq:constrkktdyn} are automatically satisfied for each iteration due to the forward simulation phase described in Section \ref{ssec:derivforward} and the assumption of a known initial state in the OCP definition in \eqref{eq:coc}.

\section{FilterDDP}\label{sec:ddp}

The FilterDDP algorithm generates an sequence of iterates $\{(\mathbf{w}_k, \bm{\lambda})\}_{k\geq 0}$ to OCP~\eqref{eq:coc}, where $\mathbf{w}_k \coloneqq (\mathbf{x}_k, \mathbf{u}_k, \bm{\phi}_k)$, given an initial estimate $\mathbf{w}_{0}$. The algorithm design follows the line search filter method proposed in~\cite{wachterglobal}, with the damped Newton step applied to KKT conditions~\eqref{eq:kkteq} replaced with a backward recursion and forward simulation phase, to be described in this section. Local quadratic convergence of this algorithm around a critical point was established by Xu et al. \cite{filterddp}.

\subsection{Preliminaries} For a function $z \in \{\ell, f, c\}$ and $y_1, y_2 \in \{x, u\}$, let $z^{k, t}_{y_1}$ and $z^{k, t}_{y_1 y_2}$ be shorthand for the partial derivative of $z^t$ with respect to $y_1$, and $y_2$ followed by $y_1$, respectively, evaluated at $x_{k, t}, u_{k, t}$. For example, in the scalar valued function case, $\ell_{u}^{k, t} \coloneqq \nabla_{u} \ell(x_{k, t}, u_{k, t})$ and $\ell_{xu}^{k, t} \coloneqq  \nabla_{xu}^2 \ell(x_{k, t}, u_{k, t})$. Examples for the vector valued function case include $c_{x}^{k, t} \coloneqq \nabla_x c(x_{k, t}, u_{k, t})$ and $f_{xu}^{k, t} \coloneqq \nabla_{xu}^2 f(x_{k, t}, u_{k, t})$. Let $z^{k, t}$ be shorthand for $z$ evaluated at $x_{k, t}$, $u_{k, t}$, e.g., $c^{k, t} \coloneqq c(x_{k, t}, u_{k,t})$. 

\subsection{Backwards Pass}\label{ssec:recursivenewton}
The backward pass recursively updates the variables $r_{k, t}, \lambda_{k, t} \in \mathbb{R}^{n_x}$ and $P_{k, t} \in \mathbb{R}^{n_x\times n_x}$ symmetric for all $t\in[N+1]$ and $k$ with boundary conditions $r_{k, N+1}, \lambda_{k, N+1} \coloneqq 0$ and $P_{k, N+1} = 0$. We define the recursion as follows. Suppose for the current index $t\in[N]$ that $r_{k, t+1}, \lambda_{k, t+1}$ and $P_{k, t+1}$ are known. Let
\begin{equation}\label{eq:Q}
  Q^{k, t}(x, u, \phi) \coloneqq L(x, u, \phi) + V^{k, t+1}(f(x, u)),
\end{equation}
where
\begin{equation}
    V^{k, t}(x)\coloneqq r_{k, t}^\top x + \frac{1}{2} x^\top P_{k, t} x.
\end{equation}

Noting that $V^{k, t}_x \coloneqq \nabla_x V^{k, t}(x_{k, t}) = r_{k, t}^\top + x_{k, t}^\top P_{k, t}$ and $V_{xx}^{k, t} \coloneqq \nabla^2_{xx} V^{k, t}(x_{k, t}) = P_{k, t}$ and differentiating $Q^{k, t}$ at the nominal trajectory $x_{k, t}, u_{k, t}$ and $\phi_{k, t}$ yields
\begin{subequations}\label{eq:Qderivs}
\begin{gather}
  Q_{uu}^{k, t} \coloneqq L_{uu}^{k, t} + (f_u^{k, t})^\top P_{k, t+1} f_u^{k, t} + V_{x}^{k, t+1} \cdot f_{uu}^{k, t} , \label{eq:Quut}\\
  Q_{ux}^{k, t} \coloneqq L_{ux}^{k, t} + (f_u^{k, t})^\top P_{k, t+1} f_x^{k, t} + V_{x}^{k, t+1} \cdot f_{ux}^{k, t}, \label{eq:Quxt} \\
  Q_{xx}^{k, t} \coloneqq L_{xx}^{k, t} + (f_x^{k, t})^\top P_{k, t+1} f_x^{k, t} + V_{x}^{k, t+1} \cdot f_{xx}^{k, t}, \label{eq:Qxxt} \\
  Q_u^{k, t} \coloneqq \ell_u^{k, t} +  V_{x}^{k, t+1} f_u^{k, t} + \phi_{k, t}^\top c_u^{
  k, t}, \label{eq:gt}\\
  Q_x^{k, t} \coloneqq \ell_x^{k, t} + V_{x}^{k, t+1} f_x^{k, t} + \phi_{k, t}^\top c_x^{k, t}. \label{eq:Qx}
\end{gather}
\end{subequations}

Next, define the perturbed hessians introduced by \cite{filterddp}, given by
\begin{subequations}\label{eq:hessiansperturbedexact}
    \begin{gather}
    \tilde{H}_{k, t} \coloneqq L_{uu}^{k, t} + (f_u^{k, t})^\top P_{k, t+1} f_u^{k, t} + \lambda_{k, t+1} \cdot f_{uu}^{k, t},  \label{eq:Ht}\\
    \tilde{B}_{k, t} \coloneqq L_{ux}^{k, t} + (f_u^{k, t})^\top P_{k, t+1} f_x^{k, t} + \lambda_{k, t+1} \cdot f_{ux}^{k, t} , \label{eq:Bt} \\
    \tilde{C}_{k, t} \coloneqq L_{xx}^{k, t} + (f_x^{k, t})^\top P_{k, t+1} f_x^{k, t} + \lambda_{k, t+1} \cdot f_{xx}^{k, t}. \label{eq:Ct}
  \end{gather}
\end{subequations}

Local quadratic convergence of FilterDDP using \eqref{eq:hessiansperturbedexact} to determine the update rule in the backward pass was established in \cite{filterddp}. Next, we introduce update rule parameters $\zeta_{k, t}, \psi_{k, t}$, $\beta_{k, t}, \omega_{k, t}$, which satisfy the relation
\begin{equation}\label{eq:bwlinear}
    \begin{bmatrix}
        H_{k, t}  & A_{k, t} \\
        A_{k, t}^\top & 0 
    \end{bmatrix}
    \begin{bmatrix}
        \zeta_{k, t} & \beta_{k, t} \\ \psi_{k, t} & \omega_{k, t}
    \end{bmatrix}
     = -\begin{bmatrix}
         (Q_u^{k, t})^\top & B_{k, t} \\ c^{k, t} & c_x^{k, t}
     \end{bmatrix},
\end{equation}
where $H_{k, t}$, $B_{k, t}$ and $C_{k, t}$ are bounded approximations to $\tilde{H}_{k, t}$, $\tilde{B}_{k, t}$ and $\tilde{C}_{k, t}$. Moreover, to establish the global convergence result in section \ref{sec:global}, it is required that $H_{k,t}$ be uniformly positive definite in the null space of the constraint Jacobian $A_{k,t}^\top$ for all $t\in[N]$ and $k$. Finally, we complete the recursion step by defining
\begin{subequations}\label{eq:vderivsupdate}
\begin{gather}
 \lambda_{k, t} \coloneqq (L_{x}^{k, t} + \phi_{k, t}^\top c_{k, t} + \lambda_{k, t+1}^\top f_{x}^{k, t})^\top \\
    P_{k, t} \coloneqq C_{k, t} + \beta_{k, t}^\top H_{k, t} \beta_{k, t} + B_{k, t}^\top \beta_{k, t} + \beta_{k, t}^\top B_{k, t},\label{eq:vxxupdate} \\
  r_{k, t} \coloneqq Q_x^{k, t} + Q_u^{k, t} \beta_{k, t} + (c^{k, t})^\top \omega_{k, t} - P_{k, t} x_{k, t}. \label{eq:vxupdate}
\end{gather}
\end{subequations}
The recursion begins by setting $t=N$ and proceeds backwards in time until $t=0$.

\subsection{Forward Simulation}\label{ssec:derivforward}

After the backward phase in Section~\ref{ssec:recursivenewton} is complete, a forward simulation phase can be applied to determine the updated iterate for iteration $k+1$. In particular, we apply a nonlinear update rule using forward simulation given by
\begin{gather}
    u_{k+1, t} = u_{k, t} + \alpha_k \zeta_{k, t} + \beta_{k, t} (x_{k+1, t} - x_{k, t}), \label{eq:forwardu} \\
    \phi_{k+1, t} = \phi_{k,t} + \alpha_k \psi_{k, t} + \omega_{k, t} (x_{k+1, t}- x_{k, t}), \label{eq:forwardphi}\\
    x_{k+1, t+1} = f(x_{k+1, t}, u_{k+1, t}),
\end{gather}
where $\alpha_k \in (0, 1]$ is the step size for iteration $k$ selected independent of $t$, determined using a backtracking line search where a decreasing sequence of step sizes $\alpha_{k}^l \in (0, 1]$, $l=\{0, 1, 2, \dots\}$ is tried until certain step acceptance criteria are satisfied. After determining the step size and thus, the next iterate, the algorithm proceeds to the next iteration. The criteria for accepting a trial point with step size $\alpha_{k}^l$ is based on the filter line search procedure described in~\cite{wachterglobal}, using the Lagrangian (as opposed to the objective) as one of the filter criteria. Global convergence of the method under the Lagrangian was established in section 4.1 of~\cite{wachterglobal}. 

The two filter criteria, namely the Lagrangian and constraint violation measure of OCP~\eqref{eq:coc}, are given by
\begin{subequations}\label{eq:filtermeasures}
    \begin{gather}
      \mathcal{L}(\mathbf{w}) \coloneqq \sum_{t=1}^N \ell(x_t, u_t) + \phi_t^\top c(x_t, u_t) \quad \text{and} \label{eq:filterlagrange}\\
      \theta(\mathbf{w}) \coloneqq \sum_{t=1}^N \|c(x_t, u_t)\| \label{eq:filterconstr},
    \end{gather}
\end{subequations}
respectively.
Before defining the step acceptance criteria, we first introduce notation for a trial point $\mathbf{w}_k^+(\alpha)$ with arbitrary step size $\alpha\in[0, 1]$, iteratively, to satisfy
\begin{subequations}\label{eq:forwardls}
    \begin{align}
      u_{k, t}^+(\alpha) & \coloneqq u_{k, t} + \alpha \zeta_{k, t} + \beta_{k, t}(x^+_{k, t}(\alpha) - x_{k, t}), \label{eq:uforwardls} \\
      \phi_{k, t}^+(\alpha) &\coloneqq \phi_{k, t} + \alpha \psi_{k, t} + \omega_{k, t} (x^+_{k, t}(\alpha) - x_{k, t}), \label{eq:phiforwardls}\\
      x^+_{k, t+1}(\alpha) &\coloneqq f(x^+_{k, t}(\alpha), u^+_{k, t}(\alpha)), \quad x^+_{k, 1}(\alpha) \coloneqq \hat{x}_1, \label{eq:dynforwardls}
    \end{align}
\end{subequations}
for all $t\in[N]$. 
\begin{remark}
  The forward simulation phase of DDP of \eqref{eq:forwardls} for determining each iterate ensures that constraint~\eqref{eq:constrkktdyn} is always satisfied. Hence, $\mathcal{L}$ in \eqref{eq:filterlagrange} and $\mathcal{L}$ in \eqref{eq:constrlagrangian} are equivalent for all trial points $\mathbf{w}_k(\alpha)$ of FilterDDP for any $\bm{\lambda}$ and $\alpha\in(0, 1]$. For the remainder of the article, the specific $\mathcal{L}$ used will be indicated explicitly by its arguments.
\end{remark}

The step acceptance criteria adopted in this paper is similar to the one described in~\cite{wachterglobal}, and involves several criterion described in the following sections.

\subsection{Sufficient reduction} A trial step size $\alpha_{k}^l$ must provide a \emph{sufficient reduction} in either the Lagrangian or constraint violation, i.e., for constants $\gamma_\mathcal{L}, \gamma_\theta > 0$, either of
\begin{subequations}\label{eq:filtersufficient}
  \begin{gather}
      \mathcal{L}(\mathbf{w}^+_{k}(\alpha_{k}^l)) \leq (1 - \gamma_\mathcal{L}) \mathcal{L}(\mathbf{w}_{k}), \label{eq:filtersufficientJ}\\
      \theta(\mathbf{w}^+_{k}(\alpha_{k}^l)) \leq (1 - \gamma_\theta) \theta(\mathbf{w}_{k}), \label{eq:filtersufficienttheta}
  \end{gather} 
\end{subequations}
holds. To prevent iterates converging to a feasible but non-optimal point,~\eqref{eq:filtersufficient} is replaced with a different criteria when the \emph{switching condition} given by
\begin{equation}\label{eq:switching}
    \begin{gathered}
        m_k(\alpha_{k}^l) \coloneqq \alpha_{k}^l \left(\sum_{t=1}^N   Q_u^{k, t}  \zeta_{k, t}  + \psi_{k, t}^\top c^{k, t} \right) < 0 \quad \text{and} \\
        (-m_k(\alpha_{k}^l))^{s_L}(\alpha_{k}^l)^{1 - s_L} > \delta \theta(\mathbf{w}^+_{k}(\alpha_{k}^l))^{s_\theta}
    \end{gathered}
\end{equation}
holds, with fixed constants $\delta > 0$, $s_\theta > 1$, $s_L \geq 1$, and $m_k(\alpha_{k}^l)$ is a model of the change in objective.

If the switching condition in \eqref{eq:switching} holds, then we replace the sufficient decrease condition in \eqref{eq:filtersufficient} with an Armijo-type condition for the objective function given by
\begin{equation}\label{eq:filterarmijo}
    \mathcal{L}(\mathbf{w}^+_{k}(\alpha_{k}^l)) \leq \mathcal{L}(\mathbf{w}_{k}) +  \eta_\mathcal{L} m_k(\alpha_{k}^l).
\end{equation}
Following~\cite{wachterglobal}, iterations in which the updated iterates satisfy \eqref{eq:switching} and \eqref{eq:filterarmijo} are called ``$\mathcal{L}$-type" iterations.

\subsection{Filter as a taboo region} A filter, which defines a taboo region of the iterates in the $\{(\theta, \mathcal{L}) \in \mathbb{R}^2: \theta \geq 0\}$ half-plane, is maintained to prevent the algorithm from \emph{cycling}, e.g., alternating between two points that decrease one of the measures $\theta$ and $\mathcal{L}$ while increasing the other. 

Following~\cite{wachterglobal}, we denote the filter by a set $\mathcal{F}_k\subseteq [0, \infty) \times \mathbb{R}$ containing all $(\theta, \mathcal{L})$ pairs that are prohibited in iteration $k$. A point is considered acceptable to the filter if
\begin{equation}\label{eq:filteracceptable}
    \left(\theta(\mathbf{w}^+_{k}(\alpha_{k}^l)), \, \mathcal{L}(\mathbf{w}^+_{k}(\alpha_{k}^l))\right) \notin \mathcal{F}_k.
\end{equation}

At the beginning of the optimisation, we initialise $\mathcal{F}_0 = \emptyset$\footnote{Alternatively, we can initialise the filter with an upper bound on the constraint violation, i.e., $\mathcal{F}_0 = \{(\theta, L) \in \mathbb{R}^2 : \theta \geq \theta_{\max}\}$ for $\theta_{\max}> 0$, without affecting the global convergence result, see~\cite{wachterglobal}.} and augment the filter in some iterations after the new iterate $\mathbf{w}_{k+1}$ is accepted, using the formula
\begin{equation}\label{eq:filterupdate}
      \mathcal{F}_{k+1} = \mathcal{F}_k \cup \left\{(\theta, \mathcal{L}) \in \mathbb{R}^2: \theta \geq (1 - \gamma_\theta) \theta(\mathbf{w}_{k}) \quad
      \text{and} \quad \mathcal{L} \geq \mathcal{L}(\mathbf{w}_{k}) - \gamma_\mathcal{L}  \theta(\mathbf{w}_{k})  \right\}. 
\end{equation}
If the filter is not augmented in the current iteration, then it is unchanged, i.e., $\mathcal{F}_{k+1} = \mathcal{F}_k$. Furthermore, the filter is only augmented in iterations which are not $\mathcal{L}$-type iterations, i.e., when~\eqref{eq:switching} does not hold for the accepted step size $\alpha_{k}$. 

\subsection{Feasibility restoration phase} In Lemma~\ref{lem:constrlinear}, we show that the constraint violation can be reduced at every iteration, i.e., $\theta(\mathbf{w}^+_{k}(\alpha_{k}^l)) < \theta(\mathbf{w}_{k})$ for a sufficiently small step size $\alpha_{k}^l$. However, a \emph{sufficient} decrease according to~\eqref{eq:filtersufficienttheta} is not guaranteed. As a result, the algorithm assumes the availability of a \emph{restoration phase}, whose purpose is to find a new iterate which satisfies~\eqref{eq:filtersufficient} and is acceptable to the filter, by trying to decrease the constraint violation. 

Following~\cite{wachterglobal}, we do not describe a specific method for this restoration phase. However, a concrete implementation for the restoration phase in the general NLP case is described in~\cite{ipopt}. In particular, an iterative method is proposed, which decreases the constraint violation while attempting not to deviate too far from the current iterate. The method in~\cite{ipopt} can be in principle, adapted to the FilterDDP algorithm.

As in~\cite{wachterglobal}, the FilterDDP algorithm switches to the restoration phase when the step size $\alpha_{k}^l$ is below a threshold $\alpha_k^{\min}$, where
\begin{equation}\label{eq:feasminstep}
    \alpha_k^{\min} \coloneqq \gamma_\alpha \cdot \begin{cases}
        \min \left\{ \gamma_\theta, \frac{\gamma_\mathcal{L} \theta(\mathbf{w}_k)}{-m_k(1)}, \frac{\delta\theta(\mathbf{w}_k)^{s_\theta}}{(-m_k(1))^{s_\mathcal{L}}} \right\} & \text{if} \, m_k(1) < 0,\\
        \gamma_\theta & \text{otherwise}
    \end{cases}.
\end{equation}
The conditions~\eqref{eq:feasminstep} are derived using linear models of the sufficient decrease conditions~\eqref{eq:filtersufficientJ},~\eqref{eq:filtersufficienttheta} and~\eqref{eq:switching} and a detailed derivation of~\eqref{eq:feasminstep} is presented in~\cite{wachterglobal}.

The algorithm also switches to the restoration phase if the constraint Jacobians $A_{k, t}^\top$ is (almost) rank deficient for any $t\in[N]$, i.e., $\sigma_{\min}(A_{k, t})$ is arbitrarily close to zero. We assume the algorithm is able to detect these cases.

\begin{remark}
    We defer the reader to~\cite{wachterglobal} for a detailed discussion around the motivation behind the aforementioned criteria for establishing global convergence the line search filter algorithm. These considerations equally apply to FilterDDP. 
\end{remark}

\subsection{The algorithm}
We now formally state the overall FilterDDP algorithm for solving constrained optimal control problems of form \eqref{eq:coc}.

\medskip

\begin{algorithmI*} \hfill \medskip

\textit{Given:} Starting point $\mathbf{w}_{0}$, constants $\theta_{\max} \in (\theta(\mathbf{w}_{0}), \infty]$; $\gamma_\theta, \gamma_\mathcal{L} \in (0, 1)$; $\delta > 0$; $\gamma_\alpha \in (0, 1]$; $s_\theta > 1$; $s_\mathcal{L} \geq 1$; $\eta_\mathcal{L} \in (0, \frac{1}{2})$; $0 < \tau_1 \leq \tau_2 < 1$.
\begin{enumerate}[label*=\arabic*.]
    \item \textit{Initialise.} \,\, Initialise the the iteration counter $k \gets 0$ and filter $\mathcal{F}_0 \coloneqq \left\{ (\theta, \mathcal{L}) \in \mathbb{R}^2 : \theta \geq \theta_{\max} \right\}$.
    \item \textit{Check convergence.} \,\, Stop if $\mathbf{w}_{k}$ satisfies KKT conditions~\eqref{eq:kkteq}. \label{alg:checkconverge}
    \item \textit{Backward pass.} \,\, Set $r_{k, N+1}, \lambda_{k, N+1} \gets 0$, $P_{k, N+1} \gets 0$ and time counter $t \gets N$.
    \begin{enumerate}[label*=\arabic*.]
        \item If $t = 0$, go to step~\ref{alg:commencels} \label{alg:beginbw}
        \item Solve for $\zeta_{k, t}, \beta_{k, t}, \psi_{k, t}$ and $\omega_{k, t}$ using~\eqref{eq:bwlinear}. If the matrix in~\eqref{eq:bwlinear} is too ill conditioned, go to the feasibility restoration phase in step~\ref{alg:restophase} \label{alg:restofrombw}
        \item Update $r_{k, t}$, $\lambda_{k, t}$, $P_{k, t}$ using~\eqref{eq:vderivsupdate}.
        \item Set $t \gets t - 1$ and go to~\ref{alg:beginbw}
    \end{enumerate}
    \item \textit{Backtracking line search.} \,\, 
        \begin{enumerate}[label*=\arabic*.]
            \item \textit{Initialise line search.} \,\, Set $\alpha_{k, 0} = 1$ and $l \gets 0$. \label{alg:commencels}
            \item \textit{Compute new trial point.} \,\, If the trial step size becomes too small, i.e., $\alpha_{k}^l < \alpha_k^{\min}$ where $\alpha_k^{\min}$ is defined in~\eqref{eq:feasminstep}, go to the feasibility restoration phase in step~\ref{alg:restophase} Otherwise, compute the new trial point $\mathbf{w}^+_{k}(\alpha_{k}^l)$ using the forward pass, i.e.,~\eqref{eq:forwardls}.  \label{alg:propnewtrialpoint}
            \item \textit{Check acceptability to the filter.} \,\, If~\eqref{eq:filteracceptable} does not hold, reject the trial step size and go to step~\ref{alg:newstepsize}
            \item \textit{Check sufficient decrease with respect to the current iterate.} \,\,
                \begin{enumerate}[label*=\arabic*.]
                    \item \textit{Case} I: $\alpha_{k}^l$ is a $\mathcal{L}$-step size (i.e.,~\eqref{eq:switching} holds). If the condition~\eqref{eq:filterarmijo} holds, accept the trial step and go to step~\ref{alg:accepttrial} Otherwise, go to step~\ref{alg:newstepsize}
                    \item \textit{Case} II: $\alpha_{k}^l$ is not a $\mathcal{L}$-step size, (i.e.,~\eqref{eq:switching} is not satisfied): If~\eqref{eq:filtersufficient} holds, accept the trial step and go to step~\ref{alg:accepttrial} Otherwise, go to step~\ref{alg:newstepsize}
                \end{enumerate}
        \end{enumerate}
        \item \textit{Choose new trial step size.} Choose $\alpha_{k}^{l+1} \in \left[\tau_1 \alpha_{k}^l, \tau_2 \alpha_{k}^l\right]$, set $l \gets l+1$ and go back to step \ref{alg:propnewtrialpoint} \label{alg:newstepsize}
        \item \textit{Accept trial point.} \, Set $\alpha_k \coloneqq \alpha_{k}^l$ and $\mathbf{w}_{k+1} \coloneqq \mathbf{w}_{k}^+(\alpha_{k})$. \label{alg:accepttrial}
        \item \textit{Augment filter if necessary.} \,\, If $k$ is not a $\mathcal{L}$-type iteration, augment the filter using \eqref{eq:filterupdate}; otherwise leave the filter unchanged, i.e., set $\mathcal{F}_{k+1} \coloneqq \mathcal{F}_k$. 
        \item \textit{Continue with next iteration.} \,\, Increase the iteration counter $k \gets k+1$ and go back to step \ref{alg:checkconverge} \label{alg:itercounterincre}
        \item \textit{Feasibility restoration phase.} Compute a new iterate $\mathbf{w}_{k+1}$ by decreasing  $\theta$ so that $\mathbf{w}_{k+1}$ satisfies the sufficient decrease conditions \eqref{eq:filtersufficient} and is acceptable to the filter, i.e., $(\theta(\mathbf{w}_{k+1}), \mathcal{L}(\mathbf{w}_{k+1})) \notin \mathcal{F}_{k}$. Augment the filter using \eqref{eq:filterupdate} and continue with the regular iteration in step \ref{alg:itercounterincre} \label{alg:restophase}
\end{enumerate}
\end{algorithmI*}

\begin{remark}\label{rmk:linesearchfiltersimilar}
    Algorithm I differs from Algorithm I in~\cite{wachterglobal} predominantly in steps 3 and 4.2, where the damped Newton step is replaced by the recursive Newton method of section~\ref{ssec:recursivenewton} and forward simulation of section~\ref{ssec:derivforward}. Furthermore, the objective value in the filter criteria is replaced with the Lagrangian and the Armijo condition in step 4.4.1 of Algorithm I in~\cite{wachterglobal} is replaced with condition~\eqref{eq:filterarmijo}.
\end{remark}

\section{Global Convergence}\label{sec:global}

A consequence of Remark~\ref{rmk:linesearchfiltersimilar} is that we only need to show that the recursive Newton and forward simulation steps together behave similarly to a regular Newton step for OCPs of form \eqref{eq:coc} to establish global convergence of the FilterDDP algorithm. 

\subsection{Assumptions} The set $\mathcal{R} \subseteq \mathbb{N}$ is defined as the set of iteration indices in which the feasibility restoration phase is invoked. Denote $\mathcal{R}_\text{inc} \subseteq \mathcal{R}$ the set of iteration counters in which the restoration phase is invoked from step~\ref{alg:restofrombw}. We now state the assumptions necessary for the global convergence analysis of Algorithm 1.

\medskip

\begin{assumptionsg*} Let $\{\mathbf{w}_{k}\}$ be the sequence generated by Algorithm 1, where we assume that the feasibility restoration phase in step 9 always terminates successfully and that the algorithm does not stop in step 2 at a KKT point.

\begin{enumerate}[label=(G{\arabic*})]
    \item There exists an open set $\mathcal{C}$ which contains $(x_{k, t}^+(\alpha), u^+_{k, t}(\alpha))$ for all $k\notin \mathcal{R}_\text{inc}$, $t\in[N]$ and $\alpha\in[0, 1]$ satisfying~\eqref{eq:forwardls}, such that $\ell, f$ and $c$ are differentiable over $\mathcal{C}$ and furthermore, that their function values and derivatives are bounded and Lipschitz continuous over $\mathcal{C}$. \label{ass:difflip}
    \item The Hessian approximations $H_{k,t}$, $B_{k,t}$ and $C_{k,t}$ are uniformly bounded for all $k \notin \mathcal{R}_\text{inc}$ and $t\in[N]$. \label{ass:Hbounded}
    \item The Hessian approximations $H_{k,t}$ are uniformly positive definite on the null space of the Jacobian $A_{k,t}^\top$, i.e., there exists a constant $M_H > 0$ so that for all $k\notin\mathcal{R}_\text{inc}$ and $t\in[N]$,
    \begin{equation}\label{eq:HuuPD}
        \lambda_{\min}\left(Z_{k,t}^\top H_{k, t} Z_{k,t} \right) \geq M_H,
    \end{equation}
    where the columns of $Z_{k,t}\in\mathbb{R}^{n_u \times (n_u - n_c)}$ form an orthogonal basis for the null space of constraint Jacobian $A_{k,t}^\top$. \label{ass:HuuPD}
    \item There exists a constant $M_A > 0$ so that for all $k \notin \mathcal{R}_\text{inc}$ and $t\in[N]$, we have
    \begin{equation}\label{eq:minsingA}
        \sigma_{\min}(A_{k,t}) \geq M_A.
    \end{equation}\label{ass:minsingA}
    \item The iterates for which the restoration phase is invoked from step \ref{alg:restofrombw} (for example because \eqref{eq:HuuPD} or \eqref{eq:minsingA} are violated for any $t\in[N]$) are not arbitrarily close to the feasible region, i.e., there exists a constant $\theta_\text{inc} > 0$ so that $k\notin \mathcal{R}_\text{inc}$ whenever $\theta(\mathbf{w}_{k}) \leq \theta_\text{inc}$. \label{ass:restofeas}
\end{enumerate}
\end{assumptionsg*}

\begin{remark}
    Assumptions~\ref{ass:difflip} and~\ref{ass:Hbounded} establish smoothness and boundedness of the problem data, analogous to Assumtpions (G1) and (G2) of~\cite{wachterglobal}. Assumption~\ref{ass:HuuPD} ensures a certain descent property via Lemma~\ref{lem:descent}, and serves an identical purpose to Lemma 2 in~\cite{wachterglobal}. 
\end{remark}

\begin{remark}
Suppose $Y_{k, t}$ is such that $\left[Y_{k, t} \,\, Z_{k, t}\right]$ form an orthonormal basis of $\mathbb{R}^{n_u}$ and the columns of $Z_{k, t}$ are a basis of the null space of $A_{k, t}^\top$. Then we can decompose $\zeta_{k, t}$ into two orthogonal components
\begin{equation}\label{eq:alphadecomp}
    \zeta_{k, t} = q_{k, t} + p_{k, t},
\end{equation}
where
\begin{equation}\label{eq:nullspaceq}
    q_{k, t} \coloneqq Y_{k, t} \bar{q}_{k, t} \quad \text{and} \quad p_{k, t} \coloneqq Z_{k, t} \bar{p}_{k, t},
\end{equation}
with
\begin{align}
    \bar{q}_{k, t} &\coloneqq - \left(A_{k, t}^\top Y_{k, t} \right)^{-1} c^{k, t}, \label{eq:barqtk} \\
    \bar{p}_{k, t} &\coloneqq -\left( Z_{k, t} ^\top H_{k, t} Z_{k, t} \right)^{-1} Z_{k, t}^\top \left( (Q_u^{k, t})^\top + H_{k, t} q_{k, t} \right). \label{eq:barptk}
\end{align}
\end{remark}

\begin{remark}
    The reduced Hessian given by $Z_{k, t} ^\top H_{k, t} Z_{k, t}$ can be monitored and modified if necessary to ensure~\ref{ass:HuuPD} holds in a practical implementation of the proposed algorithm. Numerical experience~\cite{filterddp} shows that a simple inertia correction procedure which adds a diagonal perturbation to~\eqref{eq:bwlinear} is effective in practice.
\end{remark} 

Similar to~\cite{wachterglobal}, we use  a \emph{first-order criticality measure} $\chi(\mathbf{w}_{k}) \in [0, \infty]$ with the property that if a subsequence $\{\mathbf{w}_{k_i}\}$ of iterates with $\chi(\mathbf{w}_{k_i}) \rightarrow 0$ converges to a feasible limit point $\mathbf{w}^\star$, then $\mathbf{w}^\star$ corresponds to a KKT solution. For the convergence analysis of the proposed algorithm, we define the criticality measure for iterations $k\notin\mathcal{R}_\text{inc}$ as
\begin{equation}\label{eq:criticality}
    \chi(\mathbf{w}_{k}) \coloneqq \sum_{t=1}^N\|\bar{p}_{k, t}\|_2,
\end{equation}
where for completeness, $\chi(\mathbf{w}_{k}) = \infty$ for $k\in\mathcal{R}_\text{inc}$.

To see that $\chi(\mathbf{w}_{k})$ is a criticality measure under Assumptions G, consider a subsequence of iterates $\{\mathbf{w}_{k_i}\}$ with $\lim_{i} \chi(\mathbf{w}_{k_i}) = 0$ and $\lim_i \mathbf{w}_{k_i} = \mathbf{w}^\star$ for some feasible limit point $\mathbf{w}^\star$. Since $\chi(\mathbf{w}_{k_i}) = \infty$ if  $k_i\in\mathcal{R}_\text{inc}$, then we have $k_i \notin \mathcal{R}_\text{inc}$ for $i$ sufficiently large. Furthermore, from Assumption \ref{ass:minsingA} and \eqref{eq:barqtk}, we have that $\lim_i \bar{q}_{k_i, t} = 0$ for all $t\in[N]$ and then from $\lim_{i} \chi(\mathbf{w}_{k_i}) = 0$, \eqref{eq:criticality}, \eqref{eq:barptk} and Assumption \ref{ass:HuuPD}, we have that $\lim_{i\rightarrow \infty} \|Z_{k_i, t}^\top (Q_u^{k_i, t})^\top\| = 0$ for all $t\in[N]$. We will show using the following result that this is an optimality measure for the OCP in~\eqref{eq:coc}.

\begin{lemma}
    Suppose an iterate $k\notin \mathcal{R}_\text{inc}$ is feasible, i.e., $c^{k, t} = 0$ and $x_{k, t+1} = f(x_{k, t}, u_{k, t})$ for $t\in[N-1]$, and furthermore, suppose that $Z_{k, t} ^\top (Q_u^{k, t})^\top = 0$ is satisfied for all $t \in [N]$. Then $\mathbf{x}_{k}, \mathbf{u}_k$ is a KKT point of \eqref{eq:coc}, i.e., there exists $\bm{\phi}^\star \coloneqq \phi^\star_{1:N}$ and $\bm{\lambda}^\star \coloneqq \lambda_{1:N}^\star$ such that~\eqref{eq:kkteq} is satisfied.
\end{lemma}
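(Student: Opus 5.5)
The plan is to build the multipliers explicitly from the quantities produced by the backward pass. I take $A_{k,t} = (c_u^{k,t})^\top$ (the constraint Jacobian with respect to the controls, as implicit in \eqref{eq:bwlinear}). The natural candidate for the dynamics multipliers is the gradient of the quadratic value model: $\lambda^\star_t \coloneqq (V_x^{k,t})^\top = r_{k,t} + P_{k,t}x_{k,t}$ for $t\in[N]$, with $\lambda^\star_{N+1}=V_x^{k,N+1\,\top}=0$ from the boundary conditions. The constraint multipliers $\phi^\star_t$ will be $\phi_{k,t}$ shifted by a correction $\mu_t$ that absorbs the part of $Q_u^{k,t}$ lying in the range of $A_{k,t}$. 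Conditions \eqref{eq:constrkktdyn} and \eqref{eq:constrkktconstr} hold by the feasibility hypothesis, so only \eqref{eq:constrkktu} and \eqref{eq:constrkktx} need checking.

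\emph{Step 1 (control stationarity).} Since the columns of $Z_{k,t}$ span $\ker A_{k,t}^\top$ and $Z_{k,t}^\top (Q_u^{k,t})^\top = 0$, the vector $(Q_u^{k,t})^\top$ lies in $(\ker A_{k,t}^\top)^\perp = \operatorname{range}(A_{k,t})$. By Assumption~\ref{ass:minsingA}, $A_{k,t}$ has full column rank, so there is a unique $\mu_t$ with $(Q_u^{k,t})^\top = -A_{k,t}\mu_t$. Set $\phi^\star_t \coloneqq \phi_{k,t} + \mu_t$. Then \eqref{eq:gt} gives
$\ell_u^{k,t} + V_x^{k,t+1} f_u^{k,t} + (\phi_t^\star)^\top c_u^{k,t} = Q_u^{k,t} + \mu_t^\top A_{k,t}^\top = 0$.
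With $\lambda^\star_{t+1} = (V_x^{k,t+1})^\top$, this is exactly \eqref{eq:constrkktu}.

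\emph{Step 2 (state stationarity).} This is the main computation. By \eqref{eq:vxupdate},
$V_x^{k,t} = r_{k,t}^\top + x_{k,t}^\top P_{k,t} = Q_x^{k,t} + Q_u^{k,t}\beta_{k,t} + (c^{k,t})^\top\omega_{k,t}$,
taking care with transposes. Feasibility gives $c^{k,t}=0$, so the last term vanishes. The second block row of \eqref{eq:bwlinear} gives $A_{k,t}^\top\beta_{k,t} = -c_x^{k,t}$. Hence
$Q_u^{k,t}\beta_{k,t} = -\mu_t^\top A_{k,t}^\top\beta_{k,t} = \mu_t^\top c_x^{k,t}$.
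Therefore
$V_x^{k,t} = \ell_x^{k,t} + V_x^{k,t+1}f_x^{k,t} + (\phi_{k,t}+\mu_t)^\top c_x^{k,t}$,
which reads $(\lambda_t^\star)^\top = \nabla_{x_t}\bigl(L(x_t,u_t,\phi_t^\star) + (\lambda_{t+1}^\star)^\top f\bigr)$, i.e.\ \eqref{eq:constrkktx}. This holds for every $t\in[N]$, including $t=1$, where $\lambda^\star_1$ is simply defined by the same formula and pairs with the initial-state constraint.

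The step I expect to need the most care is Step 2. There I must check that the identity relies only on the second block row of \eqref{eq:bwlinear}, and not on the particular approximations $H_{k,t}$, $B_{k,t}$, $C_{k,t}$. This is why the result holds for arbitrary bounded Hessian approximations: $P_{k,t}$ enters only through $V_x = r + Px$, and \eqref{eq:vxupdate} is designed so that $Px$ cancels. I also need to confirm the sign and transpose conventions between $Q_u$ (a row vector) and $A_{k,t}=(c_u^{k,t})^\top$. Finally, I note that the $\lambda_{k,t}$ of \eqref{eq:vderivsupdate} is not used; the multipliers come from $V_x$.
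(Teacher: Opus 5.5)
Your proposal is correct and follows essentially the same route as the paper: it takes $\lambda^\star_t = (V_x^{k,t})^\top$ and shifts $\phi_{k,t}$ by the range-of-$A_{k,t}$ component of $(Q_u^{k,t})^\top$ (your $\mu_t$ is the paper's $-\phi'_{k,t}$). It then gets state stationarity from \eqref{eq:vxupdate}, the relation $A_{k,t}^\top\beta_{k,t} = -c_x^{k,t}$ from \eqref{eq:bwlinear}, and $c^{k,t}=0$, exactly as the paper does.
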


\begin{proof}
    Since $A_{k, t}$ is a basis for the null space of $Z_{k ,t}^\top$, it follows that there exists some $\phi_{k,t}'$ such that $(Q_u^{k, t})^\top = A_{k, t} \phi_{k, t}' $. Letting
    \begin{equation}\label{eq:optphilamb}
      \phi_{k, t}^\star \coloneqq \phi_{k, t} - \phi_{k, t}', \quad \quad \lambda_{k, t}^\star \coloneqq (V_x^{k, t})^\top,
    \end{equation}
    it follows that 
    \begin{equation}\label{eq:Quzero}
        \setlength\arraycolsep{0pt}
        \begin{array}{ccl}
            0 & = & Q_u^{k, t} - (\phi_{k,t}')^\top A_{k, t}^\top \\
            & \overset{\eqref{eq:gt}}{=} & \ell_u^{k, t} + (\phi_{k, t} - \phi_{k,t}')^\top A_{k, t}^\top + V_x^{k, t+1} f_u^{k, t} \\ 
            & \overset{\eqref{eq:constrkktu}, \eqref{eq:optphilamb}}{=} & \nabla_{u_t} \mathcal{L}(\mathbf{x}_k, \mathbf{u}_k, \bm{\phi}^\star, \bm{\lambda}^\star),
        \end{array}
    \end{equation}
    and so KKT condition~\eqref{eq:constrkktu} is satisfied. 
    
    Furthermore, the value function gradient update becomes
    \begin{equation}\label{eq:Qxzero}
        \setlength\arraycolsep{0pt}
        \begin{array}{ccl}
            0 & \overset{\eqref{eq:vxupdate}}{=} & Q_x^{k, t} + Q_u^{k, t} \beta_{k, t} + (c^{k, t})^\top \omega_{k, t} - V_x^{k, t}  \\
            & = &  Q_x^{k, t} + (\phi_{k, t}')^\top A_{k, t}^\top \beta_{k, t} + (c^{k, t})^\top \omega_{k, t} - V_x^{k, t}\\
            & \overset{\eqref{eq:bwlinear}}{=} &  \ell_x^{k, t} + (\phi_{k, t} - \phi_{k, t}')^\top c_x^{k, t} + V_x^{k, t+1} f_x^{k, t} - V_x^{k, t} \\
            & \overset{\eqref{eq:constrkktx}}{=} & \nabla_{x_t}\mathcal{L}(\mathbf{x}_k, \mathbf{u}_k, \bm{\phi}^\star, \bm{\lambda}^\star),
        \end{array}
    \end{equation}
    and so KKT condition~\eqref{eq:constrkktx} is also satisfied. Finally, the remaining KKT conditions relating to feasibility, i.e.,~\eqref{eq:constrkktdyn} and~\eqref{eq:constrkktconstr}, holds by assumption.
\end{proof}

Our first result shows that the update rule (hence iterates) are bounded, equivalent to Lemma 1 in~\cite{wachterglobal} in the general, nonlinear programming setting.

\begin{lemma}\label{lem:boundedvar}
    Suppose Assumptions G hold. Then there exist constants $M_\zeta$, $M_\beta$, $M_\psi$, $M_\omega$, $M_m > 0$ such that 
    \begin{equation}\label{eq:boundedupdate}
        \begin{gathered}
            \|\zeta_{k, t}\| \leq M_\zeta, \quad \|\beta_{k, t}\| \leq M_\beta, \quad \|\psi_{k, t}\| \leq M_\psi, \\
        \quad \|\omega_{k, t}\| \leq M_\omega , \quad \|V_x^{k, t}\| \leq M_v, \quad |m_k(\alpha)| \leq M_m \alpha, 
        \end{gathered}
    \end{equation}
\end{lemma}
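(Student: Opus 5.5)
The bounds will be obtained by a backward induction on $t$ from $t=N+1$ down to $t=1$, carried out uniformly in $k\notin\mathcal{R}_\text{inc}$. The starting point is that the KKT matrix in \eqref{eq:bwlinear} has a uniformly bounded inverse. By \ref{ass:Hbounded}, $H_{k,t}$ is uniformly bounded; by \ref{ass:HuuPD}, $\lambda_{\min}(Z_{k,t}^\top H_{k,t}Z_{k,t})\geq M_H$; and by \ref{ass:minsingA}, $\sigma_{\min}(A_{k,t})\geq M_A$. The standard null-space representation of the solution (as in the decomposition \eqref{eq:alphadecomp}--\eqref{eq:barptk}) then expresses $\zeta_{k,t}$ and $\psi_{k,t}$, and column-wise $\beta_{k,t}$ and $\omega_{k,t}$, through $(A_{k,t}^\top Y_{k,t})^{-1}$, $(Z_{k,t}^\top H_{k,t}Z_{k,t})^{-1}$ and $(Y_{k,t}^\top A_{k,t})^{-1}$, all of which have norms bounded by $1/M_A$ or $1/M_H$. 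So there is a constant $M_K$, independent of $k,t$, with
\[
\left\|\begin{bmatrix} H_{k,t} & A_{k,t}\\ A_{k,t}^\top & 0\end{bmatrix}^{-1}\right\|\leq M_K .
\]
It then remains to bound the right-hand side of \eqref{eq:bwlinear}. For this I take $A_{k,t}=(c_u^{k,t})^\top$. The terms $c^{k,t}$, $c_x^{k,t}$, $c_u^{k,t}$, $\ell_u^{k,t}$, $f_u^{k,t}$, $f_x^{k,t}$ are bounded by \ref{ass:difflip}, since $(x_{k,t},u_{k,t})=(x^+_{k,t}(0),u^+_{k,t}(0))\in\mathcal{C}$, and $B_{k,t}$ is bounded by \ref{ass:Hbounded}.

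The induction hypothesis at stage $t$ is that $P_{k,t+1}$ and $V_x^{k,t+1}$ are uniformly bounded. This holds trivially at $t=N$ because of the boundary conditions. Under the hypothesis, $Q_u^{k,t}=\ell_u^{k,t}+V_x^{k,t+1}f_u^{k,t}+\phi_{k,t}^\top c_u^{k,t}$ is bounded, and the inverse bound gives $\|\zeta_{k,t}\|,\|\beta_{k,t}\|,\|\psi_{k,t}\|,\|\omega_{k,t}\|$ bounded. The update \eqref{eq:vxxupdate} then bounds $P_{k,t}$ using the bounds on $C_{k,t}$, $H_{k,t}$, $B_{k,t}$ and $\beta_{k,t}$. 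For the gradient I would use $V_x^{k,t}=r_{k,t}^\top+x_{k,t}^\top P_{k,t}=Q_x^{k,t}+Q_u^{k,t}\beta_{k,t}+(c^{k,t})^\top\omega_{k,t}$, which follows from \eqref{eq:vxupdate}. The $P_{k,t}x_{k,t}$ term cancels, so boundedness of $x_{k,t}$ is never needed. This identity gives $\|V_x^{k,t}\|\leq M_v$, which closes the induction. Since $N$ is finite, the constants can be taken as maxima over $t\in[N]$.

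The main obstacle is the multiplier $\phi_{k,t}$, which appears in $Q_u^{k,t}$ and $Q_x^{k,t}$ and is not directly covered by Assumptions G. I would avoid bounding it by absorbing it into the linear system. With $A_{k,t}=(c_u^{k,t})^\top$, the first block row of \eqref{eq:bwlinear} reads $H\zeta+A(\phi_{k,t}+\psi_{k,t})=-(\ell_u^{k,t}+V_x^{k,t+1}f_u^{k,t})^\top$. Hence $\zeta_{k,t}$ and $\phi_{k,t}+\psi_{k,t}$ solve a system with a $\phi$-free right-hand side, and $\zeta_{k,t}$ is bounded regardless of $\phi_{k,t}$. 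In the same way, $\phi$ drops out of $V_x^{k,t}$: combining $Q_u\beta=-(\zeta^\top H+\psi^\top A^\top)\beta$ with the relations $A^\top\beta=-c_x$ and $H\beta+A\omega=-B$ cancels the $\phi_{k,t}^\top c_x^{k,t}$ term, provided $\phi$ stays out of $B$ through the $L_{ux}$ term, which is bounded by \ref{ass:Hbounded}. The quantity $\psi_{k,t}$ itself is bounded only relative to $\phi_{k,t}$, so to state $\|\psi_{k,t}\|\leq M_\psi$ literally I would additionally invoke boundedness of $\phi_{k,t}$, which I expect the paper takes to be implicit in \ref{ass:difflip}.

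Finally, for $m_k$: \eqref{eq:switching} gives $|m_k(\alpha)|\leq\alpha\sum_t(\|Q_u^{k,t}\|M_\zeta+M_\psi\|c^{k,t}\|)$. Alternatively, using the first row of the system, $Q_u\zeta+\psi^\top c=-\zeta^\top H\zeta$, which is bounded by $M_\zeta^2\,\sup\|H\|$. Either route gives $|m_k(\alpha)|\leq M_m\alpha$.
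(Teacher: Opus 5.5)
Your proof is essentially the paper's. Both argue by backward induction in $t$: the matrix in \eqref{eq:bwlinear} has a uniformly bounded inverse by (G2)--(G4), the right-hand side is bounded by (G1)--(G2), and then \eqref{eq:vxxupdate} bounds $P_{k,t}$ and \eqref{eq:switching} bounds $m_k$. Inducting on $V_x^{k,t}$ through $V_x^{k,t}=Q_x^{k,t}+Q_u^{k,t}\beta_{k,t}+(c^{k,t})^\top\omega_{k,t}$, rather than on $r_{k,t}$, is a harmless variation. The paper's version needs $x_{k,t}$ bounded, which holds because $x_{k,t+1}=f(x_{k,t},u_{k,t})$ and $f$ is bounded on $\mathcal{C}$.

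Your concern about $\phi_{k,t}$ is justified, and the paper glosses over it. Its claim that the right-hand side of \eqref{eq:bwlinear} is bounded by (G1)--(G2) silently needs $\phi_{k,t}$ bounded, through the term $\phi_{k,t}^\top c_u^{k,t}$ in $Q_u^{k,t}$. Contrary to your expectation, (G1) does not supply this, since it concerns only $\ell,f,c$ on a set of $(x,u)$. Your own observation closes the gap: $\zeta_{k,t},\beta_{k,t},\omega_{k,t},P_{k,t},V_x^{k,t}$ and $\phi_{k,t}+\psi_{k,t}$ are all bounded independently of $\phi_{k,t}$. Then $\phi_{k+1,t}=(1-\alpha_k)\phi_{k,t}+\alpha_k(\phi_{k,t}+\psi_{k,t})+\omega_{k,t}(x_{k+1,t}-x_{k,t})$, and the last term is $O(\alpha_k)$ by Lipschitz continuity. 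Hence $\{\phi_k\}$ stays bounded, provided the restoration phase returns uniformly bounded multipliers.

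One correction: your alternative identity for $m_k$ is wrong. The first column of \eqref{eq:bwlinear} gives $Q_u^{k,t}\zeta_{k,t}+\psi_{k,t}^\top c^{k,t}=-\zeta_{k,t}^\top H_{k,t}\zeta_{k,t}+2\psi_{k,t}^\top c^{k,t}$. So that route still needs $\psi_{k,t}$ bounded and adds nothing to your first bound, which is the one to keep.
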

for all $k\notin \mathcal{R}_\text{inc}$, $t\in[N]$ and $\alpha\in(0, 1]$.

\begin{proof}
    We proceed with an inductive argument. Suppose $r_{k, t+1}$ and $P_{k, t+1}$ are uniformly bounded for some $t\in[N]$ and $k\notin \mathcal{R}_\text{inc}$. It follows by Assumptions~\ref{ass:difflip} and~\ref{ass:Hbounded} that the right-hand side of~\eqref{eq:bwlinear} is uniformly bounded. Furthermore, Assumptions~\ref{ass:Hbounded}, \ref{ass:HuuPD} and~\ref{ass:minsingA} ensures that the inverse of the left-hand side in~\eqref{eq:bwlinear} exists and is uniformly bounded. Consequently, $\zeta_{k, t}$, $\psi_{k, t}$, $\beta_{k, t}$ and $\omega_{k, t}$ are uniformly bounded and by~\eqref{eq:vxupdate} and~\ref{ass:difflip}, $r_{k, t}$ and $P_{k, t}$ are uniformly bounded. The base case holds for $t=N$ since $r_{k, N+1} = 0$ and $P_{k, N+1}=0$. It follows that $\zeta_{k, t}$, $\psi_{k, t}$, $\beta_{k, t}$ and $\omega_{k, t}$ are uniformly bounded for all $t\in[N]$. It follows by~\eqref{eq:switching} that $m_k(\alpha) / \alpha $ is also uniformly bounded.
\end{proof}

\begin{remark}
  A direct corollory of Lemma \ref{lem:boundedvar} and Assumption~\ref{ass:difflip} is that the sequence of Lagrange multipliers $\{\bm{\phi}_k\}$ is bounded. As shown in section 4.1 of~\cite{wachterglobal}, this result ensures that the Lagrangian can be used instead of the objective within the filter criteria.
\end{remark}

Next, we establish the descent property equivalent to Lemma 2 in~\cite{wachterglobal} for the general NLP setting, where iterates sufficiently close to feasibility but non-optimal yield a sufficient decrease in the objective function. 

\begin{lemma}\label{lem:descent}
    Suppose Assumptions G hold. If $\{\mathbf{w}_{k_i}\}$ is a subsequence of iterates for which $\chi(\mathbf{w}_{k_i}) \geq \epsilon$ with a constant $\epsilon > 0$ independent of $i$, then there exists constants $\epsilon_1, \epsilon_2 >0$, such that
    \begin{equation}\label{eq:feasimpliesdescent}
        \theta(\mathbf{w}_{k_i}) \leq \epsilon_1 \quad \implies \quad m_{k_i}(\alpha) \leq -\epsilon_2 \alpha
    \end{equation}
    for all $i$ and $\alpha \in (0, 1]$.
\end{lemma}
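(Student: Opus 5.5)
Since $m_k(\alpha) = \alpha \sum_{t=1}^N \bigl( Q_u^{k,t}\zeta_{k,t} + \psi_{k,t}^\top c^{k,t}\bigr)$ is linear in $\alpha$, it suffices to show that the bracketed sum is at most $-\epsilon_2$ whenever $\theta(\mathbf{w}_{k_i})\le \epsilon_1$. I would follow the proof of Lemma 2 in \cite{wachterglobal}, applied stage by stage using the decomposition \eqref{eq:alphadecomp}--\eqref{eq:barptk}.

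First I will make sure the subsequence avoids $\mathcal{R}_\text{inc}$. I take $\epsilon_1 \le \theta_\text{inc}$, so that by Assumption \ref{ass:restofeas} every $k_i$ with $\theta(\mathbf{w}_{k_i})\le\epsilon_1$ satisfies $k_i\notin\mathcal{R}_\text{inc}$. Then the bounds of Lemma \ref{lem:boundedvar} and Assumptions \ref{ass:Hbounded}--\ref{ass:minsingA} all apply. Writing $\zeta = q + p$ with $p = Z\bar p$, the first row of \eqref{eq:bwlinear} gives $Z^\top\bigl((Q_u)^\top + H\zeta\bigr) = 0$, which is the definition of $\bar p$. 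Dropping indices, I then compute
\begin{align*}
Q_u \zeta &= Q_u Z\bar p + Q_u q = -\bar p^\top Z^\top H Z \bar p - \bar p^\top Z^\top H q + Q_u q,
\end{align*}
where the second equality uses $Z^\top Q_u^\top = -Z^\top H Z\bar p - Z^\top H q$. By Assumption \ref{ass:HuuPD}, the first term is at most $-M_H\|\bar p\|^2$. For the remaining terms, \eqref{eq:barqtk} and Assumption \ref{ass:minsingA} give $\|q\| = \|\bar q\| \le \|c^{k,t}\|/M_A$, since $\sigma_{\min}(A^\top Y)=\sigma_{\min}(A)$ because $A^\top Z = 0$. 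Also $\|\bar p\|\le \|\zeta\|\le M_\zeta$, while $H$ is bounded by \ref{ass:Hbounded} and $Q_u$ is bounded by Lemma \ref{lem:boundedvar} together with \ref{ass:difflip}. Finally $|\psi^\top c|\le M_\psi\|c\|$. Putting these together, there is a constant $G$ with
\[
Q_u^{k,t}\zeta_{k,t} + \psi_{k,t}^\top c^{k,t} \le -M_H \|\bar p_{k,t}\|^2 + G\|c^{k,t}\|.
\]

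Next I sum over $t$. By Cauchy--Schwarz, $\sum_t \|\bar p_{k,t}\|^2 \ge \chi(\mathbf{w}_k)^2/N$. Using $\sum_t\|c^{k,t}\| = \theta(\mathbf{w}_k)$, this gives
\[
m_{k_i}(1) \le -\frac{M_H}{N}\epsilon^2 + G\,\theta(\mathbf{w}_{k_i}).
\]
I then choose $\epsilon_1 = \min\{\theta_\text{inc}, M_H\epsilon^2/(2NG)\}$ and $\epsilon_2 = M_H\epsilon^2/(2N)$. With these choices $\theta(\mathbf{w}_{k_i})\le\epsilon_1$ implies $m_{k_i}(1)\le-\epsilon_2$, and linearity in $\alpha$ gives $m_{k_i}(\alpha)\le -\epsilon_2\alpha$, as required.

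The main obstacle is the uniform bound on the $Q_u^{k,t}$ term. The vector $Q_u$ involves $V_x^{k,t+1}$, which is built recursively from $r$ and $P$. So I need the inductive argument of Lemma \ref{lem:boundedvar} to yield bounds that are uniform in $k$. In particular, $P_{k,t}$ and $r_{k,t}$, which is offset by $x_{k,t}$, must stay bounded, and this uses the boundedness of the iterates in $\mathcal{C}$ from \ref{ass:difflip}. I would spell this out explicitly. The algebraic identity for $Q_u\zeta$ is then routine.
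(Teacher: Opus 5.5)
Your proof is correct and follows the paper's route. It uses the same null-space decomposition of $\zeta_{k,t}$, the same identity for $Q_u^{k,t}Z_{k,t}\bar p_{k,t}$, Assumptions \ref{ass:Hbounded}--\ref{ass:restofeas} with $\epsilon_1\le\theta_\text{inc}$, and Cauchy--Schwarz over the horizon; the uniform bound on $Q_u^{k,t}$ that you flag as the main obstacle is already given by Lemma \ref{lem:boundedvar} through $\|V_x^{k,t}\|\le M_v$. The only difference is cosmetic: you absorb the cross term $\bar p_{k,t}^\top Z_{k,t}^\top H_{k,t} q_{k,t}$ using $\|\bar p_{k,t}\|\le M_\zeta$, whereas the paper keeps it as $c_2\|\bar p_{k,t}\|\|c^{k,t}\|$ and factors out $\chi(\mathbf{w}_{k_i})$, which gives the slightly stronger (but here unneeded) bound $m_{k_i}(\alpha)\le -\alpha\frac{\epsilon c_1}{2N}\chi(\mathbf{w}_{k_i})$.
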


\begin{proof}
    Consider a subsequence of iterates $\{\mathbf{w}_{k_i}\}$ with $\chi(\mathbf{w}_{k_i}) \geq \epsilon$. By Assumption \ref{ass:restofeas}, for all $\mathbf{w}_{k_i}$ with $\theta(\mathbf{w}_{k_i}) \leq \theta_\text{inc}$, we have $k_i \notin \mathcal{R}_\text{inc}$. Furthermore, $q_{k_i, t} = O(\|c^{k_i,t}\|)$ from \eqref{eq:nullspaceq}, \eqref{eq:barqtk} and \ref{ass:minsingA}. It follows that for all $k_i \notin \mathcal{R}_\text{inc}$,
    \begin{equation}
        \setlength\arraycolsep{0pt}
        \begin{array}{ccl}
          m_{k_i}(\alpha) / \alpha   & = & \sum_{t=1}^N Q_u^{k_i, t} \zeta_{k_i, t}  + \psi_{k_i, t}^\top c^{k_i, t}\\
          &  \overset{\eqref{eq:alphadecomp}, \eqref{eq:nullspaceq}, \eqref{eq:barqtk}}{=} & \sum_{t=1}^N Q_u^{k_i, t} Z_{k_i, t} \bar{p}_{k_i, t}  + Q_u^{k_i, t} Y_{k, t}(A_{k,t}^\top Y_{k, t})^{-1}c^{k, t} \\
          & &+ \psi_{k_i, t}^\top c^{k_i, t} \\
          & \overset{\ref{ass:difflip}, \ref{ass:minsingA}, \eqref{eq:boundedupdate}}{\leq} & \sum_{t=1}^N Q_u^{k_i, t} Z_{k_i, t} \bar{p}_{k_i, t} + c_3 \|c^{k, t} \| \\
          & \overset{\eqref{eq:barptk}}{=} &  \sum_{t=1}^N -\bar{p}_{k_i, t}^\top\left( Z_{k_i, t}^\top H_{k_i, t} Z_{k_i, t} \right) \bar{p}_{k_i, t}  - \bar{p}_{k_i, t}^\top Z_{k_i, t}^\top H_{k_i, t} q_{k_i, t} \\
          & &  + c_3 \|c^{k, t}\| \\
          & \overset{\ref{ass:Hbounded}, \eqref{eq:HuuPD}}{\leq} & \sum_{t=1}^N -c_1 \|\bar{p}_{k_i, t}\|_2^2  +c_2 \|\bar{p}_{k_i, t}\|_2 \|c^{k_i, t}\|  + c_3 \|c^{k, t}\|  \\
          & \leq & \chi(\mathbf{w}_{k_i})\Bigl(-\epsilon \frac{c_1}{N} + c_2\theta(\mathbf{w}_{k_i}) + \dfrac{c_3}{\epsilon} \theta(\mathbf{w}_{k_i})\bigr)
        \end{array}
    \end{equation}
    for some constants $c_1, c_2, c_3 > 0$, where for the last inequality, we used the identity $\left(\sum_{t=1}^N \|\bar{p}_{k_i, t}\|_2 \right)^2 \leq N \sum_{t=1}^N \|\bar{p}_{k_i, t}\|_2^2$ which follows from the Cauchy-Schwarz inequality, $\chi(\mathbf{w}_{k_i}) \geq \epsilon$ and $\sum_{t=1}^N \|\bar{p}_{k_i, t}\|_2 \|c^{k_i, t}\| \leq \chi(\mathbf{w}_{k_i}) \theta(\mathbf{w}_{k_i})$. We now define
    \begin{equation}
        \epsilon_1 \coloneqq \min \left\{ \theta_\text{inc}, \frac{\epsilon^2 c_1}{2N(c_2 \epsilon + c_3)} \right\}.
    \end{equation}
    It follows for all $\mathbf{w}_{k_i}$ with $\theta(\mathbf{w}_{k_i}) \leq \epsilon_1$ that
    \begin{equation}
        m_{k_i}(\alpha) \leq -\alpha \frac{\epsilon c_1}{2N} \chi(\mathbf{w}_{k_i}) \leq -\alpha \frac{\epsilon^2 c_1}{2N}.
    \end{equation}
    The claim follows after defining $\epsilon_2 \coloneqq \frac{\epsilon^2 c_1}{2N}$.
\end{proof}

For our final set of intermediate results in the global convergence analysis, we establish that the update rule and trial point determination (in a sense) linearises the two filter criteria~\eqref{eq:filtermeasures}. This is equivalent to Lemma 3 in~\cite{wachterglobal}.

\begin{lemma}\label{lem:constrlinear}
    Suppose Assumptions G hold. Then there exists $M_\theta > 0$ such that for all $k\notin \mathcal{R}_\text{inc}$ and all $\alpha \in [0, 1]$,
    \begin{equation}\label{eq:constrlinearthm}
        \left| \theta(\mathbf{w}^+_{k}(\alpha)) - (1 - \alpha)\theta(\mathbf{w}_{k}) \right| \leq C_\theta \alpha^2.
    \end{equation}
    
\end{lemma}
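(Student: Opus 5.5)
We show that, per stage, the trial constraint value satisfies $c(x^+_{k,t}(\alpha),u^+_{k,t}(\alpha)) = (1-\alpha)c^{k,t} + O(\alpha^2)$ uniformly in $k\notin\mathcal{R}_\text{inc}$ and $t\in[N]$. Summing norms over $t$ and using the triangle inequality, $\bigl|\|a+b\|-\|a\|\bigr|\le\|b\|$ with $a=(1-\alpha)c^{k,t}$, then gives \eqref{eq:constrlinearthm} with $C_\theta$ equal to $N$ times the per-stage constant. (The statement names the constant both $M_\theta$ and $C_\theta$; they are the same constant.)

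\emph{Step 1: state deviations are $O(\alpha)$.} Set $\delta x_t(\alpha) \coloneqq x^+_{k,t}(\alpha)-x_{k,t}$ and $\delta u_t(\alpha)\coloneqq u^+_{k,t}(\alpha)-u_{k,t}$. By \eqref{eq:uforwardls}, $\delta u_t = \alpha\zeta_{k,t}+\beta_{k,t}\delta x_t$, and $\delta x_1=0$. Since $x_{k,t+1}=f(x_{k,t},u_{k,t})$ (dynamic feasibility of the iterates), we can write $\delta x_{t+1} = f(x^+_{k,t},u^+_{k,t})-f(x_{k,t},u_{k,t})$. Lipschitz continuity of $f$ on $\mathcal{C}$ (Assumption~\ref{ass:difflip}) and the bounds of Lemma~\ref{lem:boundedvar} then give, by induction on $t$, $\|\delta x_t\|\le K_t\alpha$ and $\|\delta u_t\|\le K_t'\alpha$. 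The constants depend only on $N$, the Lipschitz constants and $M_\zeta,M_\beta$. Assumption~\ref{ass:difflip} guarantees that all trial points stay in $\mathcal{C}$, so these bounds apply along the segments joining nominal and trial points.

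\emph{Step 2: linearize.} Since $c$ has Lipschitz continuous derivatives on $\mathcal{C}$, Taylor's theorem gives
\[
c(x^+_{k,t},u^+_{k,t}) = c^{k,t}+c_x^{k,t}\delta x_t + c_u^{k,t}\delta u_t + O(\|\delta x_t\|^2+\|\delta u_t\|^2).
\]
Here $c_u^{k,t}=A_{k,t}^\top$; this is the identification used in \eqref{eq:bwlinear} and in the proof of the KKT lemma. Substituting $\delta u_t$ gives
\[
c_x^{k,t}\delta x_t + A_{k,t}^\top(\alpha\zeta_{k,t}+\beta_{k,t}\delta x_t) = \alpha A_{k,t}^\top\zeta_{k,t} + (c_x^{k,t}+A_{k,t}^\top\beta_{k,t})\delta x_t.
\]
The second block row of \eqref{eq:bwlinear} reads $A_{k,t}^\top\zeta_{k,t}=-c^{k,t}$ and $A_{k,t}^\top\beta_{k,t}=-c_x^{k,t}$. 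Hence the coefficient of $\delta x_t$ vanishes exactly and the linear term is $-\alpha c^{k,t}$. Step 1 bounds the remainder by $O(\alpha^2)$, which yields $c(x^+_{k,t},u^+_{k,t})=(1-\alpha)c^{k,t}+O(\alpha^2)$.

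\emph{Main obstacle.} The delicate point is the cancellation of the $\delta x_t$ term. Because of nonlinear forward simulation, $\delta x_t$ is only $O(\alpha)$, not $O(\alpha^2)$, so a first-order term in $\delta x_t$ would otherwise spoil the quadratic estimate. The feedback gain $\beta_{k,t}$ is chosen precisely so that $c_x^{k,t}+A_{k,t}^\top\beta_{k,t}=0$, making the linearized constraint insensitive to state deviations. The remaining care is to make every constant uniform in $k\notin\mathcal{R}_\text{inc}$. This uniformity comes from Lemma~\ref{lem:boundedvar} together with Assumptions~\ref{ass:difflip}, \ref{ass:Hbounded} and~\ref{ass:minsingA}, all of which hold uniformly over such $k$.
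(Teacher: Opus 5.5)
Your proof is correct and is essentially the paper's argument. The paper Taylor-expands $\alpha \mapsto c(x^+_{k,t}(\alpha),u^+_{k,t}(\alpha))$ at $\alpha=0$, cancels the $\frac{d}{d\alpha}x^+_{k,t}(0)$ term through $c_x^{k,t}=-A_{k,t}^\top\beta_{k,t}$ exactly as you cancel $\delta x_t$, and then sums the norms; you instead expand in $(\delta x_t,\delta u_t)$ and make the uniform $O(\alpha)$ bounds and the triangle-inequality step explicit.

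One small caveat concerns your Step 2 remainder estimate, which uses the straight segments from $(x_{k,t},u_{k,t})$ to the trial points. Assumption~\ref{ass:difflip} only places the forward-simulation curves in $\mathcal{C}$, and an open set containing those curves need not contain the segments. To close this, either run the Taylor estimate along the curve as the paper does, or assume segment containment as W\"{a}chter and Biegler do.
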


\begin{proof}
     First, consider the function $c^+_{k, t}(\alpha) \coloneqq c(x^+_{k, t}(\alpha), u_{k, t}^+(\alpha))$, where $x_{k, t}^+(\alpha)$ and $u_{k, t}^+(\alpha)$ follow \eqref{eq:uforwardls}-\eqref{eq:dynforwardls}. By Assumption \ref{ass:difflipipm} and Taylor's theorem, for all $\alpha \in [0, 1]$,
    \begin{equation}\label{eq:ctaylorls}
      \begin{array}{ccl}
        c^+(\alpha) & = & c(x_{k, t}^+(\alpha), u_{k, t}^+(\alpha)) \\
        & = & c^{k, t} + \alpha \frac{d}{d\alpha} c(x_{k, t}^+(0), u_{k, t}^+(0)) + O(\alpha^2) \\
        & \overset{\eqref{eq:uforwardls}}{=} & c^{k, t} + \alpha \left( c_x^{k, t} \frac{d}{d\alpha} x_{k, t}^+(0) + A_{k, t}^\top \left( \zeta_{k, t} + \beta_{k, t} \frac{d}{d\alpha} x_{k, t}^+(0) \right) \right) + O(\alpha^2) \\
        & \overset{\eqref{eq:bwlinear}}{=} & c^{k, t} + \alpha A_{k, t}^\top \zeta_{k, t} + O(\alpha^2) \\
        & \overset{\eqref{eq:bwlinear}}{=} & (1-\alpha)c^{k, t} + O(\alpha^2),
      \end{array}
    \end{equation}
    where we use \eqref{eq:bwlinear} in line 4 of \eqref{eq:ctaylorls} through relation $c_x^{k, t} = -A_{k, t}^\top \beta_{k, t}$. It follows that 
    \begin{equation}
        \setlength\arraycolsep{0pt}
        \begin{array}{ccl}
            \theta(\mathbf{w}_k^+(\alpha)) & = & \sum_{t=1}^N \|c(x_{k, t}^+(\alpha), u_{k, t}^+(\alpha))\| \\ 
            & \overset{\eqref{eq:ctaylorls}}{=} & (1 - \alpha) \sum_{t=1}^N \|c^{k, t}\| + O(\alpha^2) \\
            & \overset{\eqref{eq:filtermeasures}}{=} & (1- \alpha) \theta(\mathbf{w}_k) + O(\alpha^2),
        \end{array}
    \end{equation}
    which is equivalent to the desired result in \eqref{eq:constrlinearthm}.
\end{proof}

\begin{lemma}\label{lem:valueapprox}
    Suppose Assumptions G hold. Then there exists $C_\mathcal{L} > 0$ such that for all $k\notin \mathcal{R}_\text{inc}$ and all $\alpha \in [0, 1]$,
    \begin{equation}\label{eq:valueapproxthm}
        \left|\mathcal{L}(\mathbf{w}^+_{k}(\alpha)) - \mathcal{L}(\mathbf{w}_{k}) - m_k(\alpha) \right| \leq C_\mathcal{L} \alpha^2.
    \end{equation}
\end{lemma}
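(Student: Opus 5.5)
The plan mirrors the proof of Lemma~\ref{lem:constrlinear}: Taylor-expand each stage term of $\mathcal{L}(\mathbf{w}^+_k(\alpha))$ to first order in $\alpha$, then show that, after summing over $t$, the first-order coefficient telescopes to $m_k(1) = \sum_{t=1}^N Q_u^{k,t}\zeta_{k,t} + \psi_{k,t}^\top c^{k,t}$.

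\textbf{Step 1: the trial point is smooth in $\alpha$.} I will write $\delta x_t \coloneqq \frac{d}{d\alpha}x^+_{k,t}(0)$. Differentiating \eqref{eq:forwardls} gives $\delta x_1 = 0$ and
\[
\frac{d}{d\alpha}u^+_{k,t}(0) = \zeta_{k,t} + \beta_{k,t}\delta x_t, \qquad \frac{d}{d\alpha}\phi^+_{k,t}(0) = \psi_{k,t} + \omega_{k,t}\delta x_t,
\]
together with the linearized dynamics $\delta x_{t+1} = f_x^{k,t}\delta x_t + f_u^{k,t}(\zeta_{k,t}+\beta_{k,t}\delta x_t)$. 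Lemma~\ref{lem:boundedvar} bounds $\zeta,\beta,\psi,\omega$ uniformly. By \ref{ass:difflip}, $f$, $\ell$ and $c$ have bounded, Lipschitz derivatives on $\mathcal{C}$, which contains the whole trial path. Induction over $t$ then shows that $x^+_{k,t}$, $u^+_{k,t}$ and $\phi^+_{k,t}$ have uniformly bounded, Lipschitz first derivatives in $\alpha$. Each stage term $\ell(x^+,u^+) + (\phi^+)^\top c(x^+,u^+)$ is therefore $C^{1,1}$ in $\alpha$ with constants independent of $k$. This justifies a uniform $O(\alpha^2)$ Taylor remainder, exactly as in \eqref{eq:ctaylorls}.

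\textbf{Step 2: compute the first-order term.} The derivative of $\mathcal{L}(\mathbf{w}^+_k(\alpha))$ at $0$ is
\[
\sum_{t=1}^N L_x^{k,t}\delta x_t + L_u^{k,t}(\zeta_{k,t}+\beta_{k,t}\delta x_t) + (c^{k,t})^\top(\psi_{k,t}+\omega_{k,t}\delta x_t).
\]
I will use \eqref{eq:gt}, $L_u^{k,t} = Q_u^{k,t} - V_x^{k,t+1}f_u^{k,t}$, and \eqref{eq:Qx}, $L_x^{k,t} = Q_x^{k,t} - V_x^{k,t+1}f_x^{k,t}$. The $\delta x_t$ terms then regroup into
\[
(Q_x^{k,t} + Q_u^{k,t}\beta_{k,t} + (c^{k,t})^\top\omega_{k,t})\delta x_t - V_x^{k,t+1}\delta x_{t+1}.
\]
By \eqref{eq:vxupdate}, the bracket equals $r_{k,t}^\top + x_{k,t}^\top P_{k,t} = V_x^{k,t}$ (modulo the transpose conventions), so this becomes $V_x^{k,t}\delta x_t - V_x^{k,t+1}\delta x_{t+1}$. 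Summing over $t$ telescopes, and the boundary terms vanish because $\delta x_1 = 0$ and $V_x^{k,N+1} = 0$. What remains is $\sum_t Q_u^{k,t}\zeta_{k,t} + \psi_{k,t}^\top c^{k,t} = m_k(1)$. Since $m_k(\alpha) = \alpha m_k(1)$, the first-order term is exactly $m_k(\alpha)$.

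\textbf{Step 3: conclude.} Taylor's theorem gives $|\mathcal{L}(\mathbf{w}^+_k(\alpha)) - \mathcal{L}(\mathbf{w}_k) - m_k(\alpha)| \le C_\mathcal{L}\alpha^2$. The constant comes from the uniform Lipschitz constants of Step 1, so it is independent of $k \notin \mathcal{R}_\text{inc}$.

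\textbf{Main obstacle.} The delicate step is the telescoping identity in Step 2. It requires that $r_{k,t}$ from \eqref{eq:vxupdate} reproduce $V_x^{k,t}$ exactly, with no leftover $\phi$ contributions hidden in $Q_x$ and $Q_u$, and that the transpose and row-vector conventions line up. I would first verify the identity for $N=1$ and $N=2$ by hand. The other point needing care is the uniformity of the remainder, which demands Lipschitz bounds on the composed forward map. I would handle this by a finite induction over $t$, which is valid because $N$ is fixed.
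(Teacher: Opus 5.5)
Your proposal is correct and is essentially the paper's argument. The paper runs the same chain-rule computation as a backward recursion on the cost-to-go $\mathcal{L}^{k,t}(x,\alpha)$: it proves $\nabla_x\mathcal{L}^{k,t}(x_{k,t},0)=V_x^{k,t}$ from \eqref{eq:vxupdate} and unrolls $\partial_\alpha\mathcal{L}^{k,t}$ to obtain $m_k(1)$, which is your telescoping of $V_x^{k,t}\delta x_t - V_x^{k,t+1}\delta x_{t+1}$ read backwards, and it then applies the same Taylor step.

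One detail to add in your Step 1: the uniform $C^{1,1}$ bound on the stage term $(\phi^+_{k,t})^\top c$ also needs the base multipliers $\phi_{k,t}$ to be uniformly bounded, not just $\psi_{k,t}$ and $\omega_{k,t}$. The paper asserts this in the remark following Lemma~\ref{lem:boundedvar}, so you can cite it there.
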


\begin{proof} 
    Define function $\mathcal{L}^{k, t}(x, \alpha)$ recursively by
    \begin{equation}\label{eq:lagrangianrecurse}
        \begin{aligned}
            \mathcal{L}^{k,t}(x, \alpha) \coloneqq & \hat{Q}^{k, t}(x, \hat{u}_{k, t}^+(x, \alpha), \hat{\phi}^+_{k, t}(x, \alpha), \alpha),
        \end{aligned}
    \end{equation}
    where $\hat{u}_{k, t}^+$, $\hat{\phi}_{k, t}^+$ are given by
    \begin{subequations}
        \begin{gather}
            \hat{u}_{k, t}^+(x, \alpha) \coloneqq u_{k, t} + \alpha \zeta_{k, t} + \beta_{k, t}(x - x_{k, t}) \\
            \hat{\phi}_{k, t}^+(x, \alpha) \coloneqq \phi_{k, t} + \alpha \psi_{k, t} + \omega_{k, t}(x - x_{k, t}),
        \end{gather}
    \end{subequations}
    base case $\mathcal{L}^{k,N+1}(x, \alpha) \coloneqq 0$ holds and
    \begin{equation}\label{eq:Qhat}
      \hat{Q}^{k, t}(x, u, \phi, \alpha) \coloneqq \ell(x, u) + \phi^\top c(x, u) + \mathcal{L}^{k, t+1}(f(x, u), \alpha).
    \end{equation}
    One can verify by direct substitution that $\mathcal{L}^{k, 1}(\hat{x}_1, \alpha) = L(\mathbf{w}^+_k(\alpha))$ and also, that $\hat{Q}^{k, t}(x, u, \phi, 0) = Q^{k, t}(x, u, \phi)$. It follows that for all $t\in[N]$ that
    \begin{equation}\label{eq:Vxrecursive}
        \setlength\arraycolsep{0pt}
        \begin{array}{ccl}
            \nabla_{x}\mathcal{L}^{k, t}(x_{k, t}, 0) & = & \nabla_{x}\hat{Q}^{k, t}(x_{k, t}, u_{k, t}, \phi_{k, t}, 0) + \nabla_{u}\hat{Q}^{k, t}(x_{k, t}, u_{k, t}, \phi_{k, t}, 0) \beta_{k, t} \\
            & &+ \nabla_{\phi} \hat{Q}^{k, t}(x_{k, t}, u_{k, t}, \phi_{k, t}, 0) \omega_{k, t} \\
            & = & Q^{k, t}_x + Q^{k, t}_u \beta_{k, t} + (c^{k, t})^\top \omega_{k, t} \\
            &\overset{\eqref{eq:vxupdate}}{=} & V_x^{k, t},
        \end{array}
    \end{equation}
    and furthermore,
    \begin{equation}\label{eq:Valpha}
        \begin{aligned}
           & \frac{\partial \mathcal{L}^{k, t}}{\partial \alpha}(x_{k, t}, 0)  \coloneqq Q_u^{k, t} \zeta_{k, t} + \psi_{k, t}^\top c^{k, t} + \frac{\partial \mathcal{L}^{k, t+1}}{\partial \alpha}(x_{k, t+1}, 0).
        \end{aligned}
    \end{equation}
    By applying substitution backwards in time to~\eqref{eq:Valpha}, we conclude that
    \begin{equation}\label{eq:malphaproof}
      \frac{\partial \mathcal{L}^{k, 1}}{\partial \alpha}(\hat{x}_1, 0) = m_k(1).
    \end{equation}
    Finally, since derivatives of $L^{k, 1}(\hat{x}_1, \alpha)$ with respect to $\alpha$ are Lipschitz continuous for all $\alpha \in [0, 1]$ by Assumption~\ref{ass:difflip}, we can apply Taylor's theorem, yielding
    \begin{equation}
        \setlength\arraycolsep{0pt}
        \begin{array}{ccl}
            \mathcal{L}(\mathbf{w}^+_k(\alpha)) & = & \mathcal{L}^{k, 1}(\hat{x}_1, \alpha) \\
            & = & \mathcal{L}^{k, 1}(\hat{x}_1, 0) + \alpha \frac{\partial \mathcal{L}^{k, 1}}{\partial \alpha}(\hat{x}_{1}, 0) + O(\alpha^2) \\
            & \overset{\eqref{eq:malphaproof}}{=} & \mathcal{L}(\mathbf{w}_k) + \alpha m_k(1) + O(\alpha^2) \\
            & \overset{\eqref{eq:switching}}{=} & \mathcal{L}(\mathbf{w}_k) + m_k(\alpha) + O(\alpha^2),
        \end{array}
    \end{equation}
    which is equivalent to the desired result~\eqref{eq:valueapproxthm}.
\end{proof}

The main global convergence result of the filter line search algorithm of~\cite{wachterglobal}, namely Theorem 2, only depends on the damped Newton step through intermediate results Lemmas 1, 2 and 3 of that article. Since we have established the equivalent of these intermediate results for the FilterDDP algorithm through Lemmas~\ref{lem:boundedvar} to~\ref{lem:valueapprox} in this article, the global convergence result of~\cite{wachterglobal} applies to the FilterDDP algorithm.

\begin{theorem}\label{thm:global}
    Suppose Assumptions G hold. Then,
    \begin{equation}
        \lim_{k\rightarrow \infty} \theta(\mathbf{w}_k) = 0
    \end{equation}
    and
    \begin{equation}
        \liminf_{k\rightarrow \infty} \chi(\mathbf{w}_k) = 0.
    \end{equation}
    In other words, all limit points are feasible, and if $\{\mathbf{w}_k\}$ is bounded, then there exists a limit point $\mathbf{w}^\star$ of $\{\mathbf{w}_{k}\}$ which is a first-order optimal point for the OCP \eqref{eq:coc}.
\end{theorem}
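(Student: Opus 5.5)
The plan is to reduce Theorem~\ref{thm:global} to the convergence theory of W\"{a}chter and Biegler~\cite{wachterglobal}, as Remark~\ref{rmk:linesearchfiltersimilar} suggests. That analysis (their Lemmas 4--10 and Theorems 1--2) never uses the Newton step directly. It only uses three facts: boundedness of the step and of the model decrease (their Lemma 1), the descent property near feasibility (their Lemma 2), and the second-order accuracy of the linear models of $\theta$ and of the merit function (their Lemma 3). The mechanics of the filter, the switching rule, $\alpha_k^{\min}$ and the restoration phase are identical in Algorithm I. So I would substitute Lemma~\ref{lem:boundedvar} for Lemma 1, Lemma~\ref{lem:descent} for Lemma 2, and Lemmas~\ref{lem:constrlinear} and~\ref{lem:valueapprox} for Lemma 3. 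Throughout, $\mathcal{L}$ in~\eqref{eq:filterlagrange} plays the role of the objective and $m_k(\alpha)$ the role of $\alpha\nabla f^\top d_k$. The boundedness of $\{\bm{\phi}_k\}$ (remark after Lemma~\ref{lem:boundedvar}) together with \ref{ass:difflip} makes $\mathcal{L}(\mathbf{w}_k)$ bounded below. This is exactly what Section 4.1 of~\cite{wachterglobal} needs to justify using the Lagrangian in place of the objective.

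The key steps, in order, are as follows.

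\textbf{Step 1.} Show that $\lim_k\theta(\mathbf{w}_k)=0$. There are two cases.

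\emph{Case (a): the filter is augmented infinitely often.} Their Lemma 4 uses only that $\theta$ and $\mathcal{L}$ are bounded, and the envelope geometry of~\eqref{eq:filterupdate}. It gives $\theta\to 0$ along the augmenting subsequence.

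\emph{Case (b): the filter is augmented only finitely often.} Then all late iterations are $\mathcal{L}$-type. The Armijo condition~\eqref{eq:filterarmijo} combined with the switching condition~\eqref{eq:switching} gives summable decreases of $\mathcal{L}$ that dominate $\theta^{s_\theta}$. This is their Lemma 5, and it forces $\theta\to 0$.

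Their Theorem 1 then combines the two cases, using that the filter excludes returning near previous entries. It also uses Lemma~\ref{lem:constrlinear} to show that iterations with $\theta>0$ can always find an acceptable step or enter restoration. Assumption~\ref{ass:restofeas} ensures that restoration from Step~\ref{alg:restofrombw} only happens away from feasibility.

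\textbf{Step 2.} Show that $\liminf_k\chi(\mathbf{w}_k)=0$, by contradiction. Suppose $\chi(\mathbf{w}_k)\ge\epsilon$ for all large $k$. Since $\theta\to0$, Lemma~\ref{lem:descent} gives $m_k(\alpha)\le-\epsilon_2\alpha$ eventually. The chain of their Lemmas 6--10 then goes through with our model bounds:
\begin{itemize}
\item the Armijo condition holds for $\alpha\le c$, using Lemma~\ref{lem:valueapprox} and $\eta_\mathcal{L}<1/2<1$;
\item the switching condition holds for $\alpha\ge c\,\theta^{s_\theta/s_\mathcal{L}}$;
\item the trial point is filter-acceptable for small $\alpha$, using Lemma~\ref{lem:constrlinear} and Lemma~\ref{lem:valueapprox};
\item $\alpha_k^{\min}$ is eventually never reached, since $\theta\to 0$ makes the bounds in~\eqref{eq:feasminstep} small relative to the accepted step.
\end{itemize}
Hence all late iterations are $\mathcal{L}$-type with $\alpha_k$ bounded below. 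Each one then decreases $\mathcal{L}$ by a fixed amount, which contradicts boundedness below.

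\textbf{Step 3.} Interpret the result. Suppose $\{\mathbf{w}_k\}$ is bounded. Take a subsequence with $\chi\to0$ and extract a convergent sub-subsequence. Its limit $\mathbf{w}^\star$ is feasible by Step 1. By the argument preceding the earlier KKT lemma, $Z^\top(Q_u)^\top\to0$ along it, so that lemma applies at $\mathbf{w}^\star$ by continuity.

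\textbf{Main obstacle.} The hardest step is the continuity in Step 3. It requires $Z_{k,t}$, $V_x^{k,t}$, $\beta_{k,t}$ and $\omega_{k,t}$ to converge, or at least to have convergent subsequences. Boundedness from Lemma~\ref{lem:boundedvar} gives convergent subsequences. Assumption~\ref{ass:minsingA} keeps the null space basis well defined in the limit. Passing to the limit in~\eqref{eq:Quzero} and~\eqref{eq:Qxzero}, taken on a common subsequence, then yields the KKT conditions.

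A second, more technical point is in Step 2. One must check that the constant $C_\theta$ in Lemma~\ref{lem:constrlinear} and the constant $C_\mathcal{L}$ in Lemma~\ref{lem:valueapprox} are uniform over $k\notin\mathcal{R}_\text{inc}$. I would verify this by noting that the Taylor remainders depend only on the Lipschitz constants from \ref{ass:difflip} and on the bounds in~\eqref{eq:boundedupdate}, propagated through the $N$ forward-simulation steps.
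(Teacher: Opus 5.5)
Your proposal is correct and takes the same route as the paper. The paper's proof is exactly the observation that Theorem 2 of \cite{wachterglobal} uses the Newton step only through its Lemmas 1--3, here replaced by Lemmas~\ref{lem:boundedvar}--\ref{lem:valueapprox} with $\mathcal{L}$ in place of the objective; your Step 3, extracting a common convergent subsequence for $Z_{k,t}$, $V_x^{k,t}$, $\beta_{k,t}$, $\omega_{k,t}$, makes explicit a limit argument the paper leaves implicit.

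One slip in your Step 2: $m_k(\alpha)\le-\epsilon_2\alpha$ gives $(-m_k(\alpha))^{s_\mathcal{L}}\alpha^{1-s_\mathcal{L}}\ge\epsilon_2^{s_\mathcal{L}}\alpha$, so switching holds once $\alpha>\delta\theta^{s_\theta}/\epsilon_2^{s_\mathcal{L}}$ (not $\alpha\ge c\,\theta^{s_\theta/s_\mathcal{L}}$). This sharper threshold matters: until the filter stops being augmented, filter acceptability is only guaranteed for $\alpha=O(\theta(\mathbf{w}_k))$, and $s_\theta>1$ is exactly what places the switching threshold inside that window.
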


\begin{proof}
    See the proof of Theorem 2 in~\cite{wachterglobal}.
\end{proof}

\section{Interior Point Extension for FilterDDP}\label{sec:ipmextension}

We describe an extension to Algorithm I for solving OCPs with additional inequality constraints of form
\begin{equation}\label{eq:cocineq}
    \begin{array}{rl}
    \underset{\mathbf{x}, \mathbf{u}}{\text{minimise}}   & \sum_{t=1}^{N} \ell(x_t, u_t) \\
    \text{subject to} & x_1 = \hat{x}_1 \\
    & x_{t+1} = f(x_t, u_t)  \quad \text{for } t \in [N-1] \\
    & c(x_t, u_t) = 0,  \quad u_t \geq 0 \quad \text{for } t \in [N]
    \end{array}
\end{equation}
and establish its global convergence properties. In particular the extension is to a barrier interior point method of the primal or primal-dual type, similar to section 4.3 of~\cite{wachterglobal} in the general NLP setting. A barrier method solves (approximately, to a fixed tolerance $\epsilon > 0$), a sequence of barrier problems given by a sequence of positive barrier parameters $\{\mu_j\}$, with $\lim_{j\rightarrow \infty} \mu_j = 0$ and takes the form
\begin{equation}\label{eq:barrierproblems}
    \begin{array}{rl}
    \underset{\mathbf{x}, \mathbf{u}}{\text{minimise}}   & \sum_{t=1}^{N} \ell(x_t, u_t) - \mu \sum_{i=1}^m \ln(u_t^{(i)}) \\
    \text{subject to} & x_1 = \hat{x}_1 \\
    & x_{t+1} = f(x_t, u_t)  \quad \text{for } t \in [N-1] \\
    & c(x_t, u_t) = 0  \quad \text{for } t \in [N].
    \end{array}
\end{equation}

A detailed discussion of the convergence of barrier methods as $\mu \rightarrow 0$ is outside of the scope of this article, however, we refer the reader to section 4.3 in~\cite{wachterglobal} for discussion around this point. As a result, we apply our global convergence analysis of the interior point extension of FilterDDP for a fixed barrier parameter $\mu > 0$. To extend the previous sections to the barrier method setting, the objective $\ell$ is replaced with the barrier objective $\varphi_\mu(x_t, u_t) \coloneqq \ell(x_t, u_t) - \mu \sum_{i=1}^m \ln(u_t^{(i)})$ at all instances in the previous sections. There are two important considerations that hold when establishing global convergence of the interior point DDP algorithm, similar to the general, nonlinear programming setting:
\begin{enumerate}
    \item The barrier objective $\varphi_t$ for $t\in[N]$ is only defined if $u_t > 0$ for all $t\in[N]$.
    \item The barrier objective and its derivatives become unbounded as $u_t$ approaches its bound for all $t \in [N]$.
\end{enumerate}
To address the first consideration, the extended algorithm first, initialises $u_{0, t} > 0$ for all $t\in[N]$ and at each iteration, imposes an additional step acceptance criteria where the step size $\alpha_k$ must satisfy a fraction-to-the-boundary condition given by
\begin{equation}\label{eq:fractobound}
    u_{k+1, t } = u_{k, t}^+(\alpha_k) \geq (1 - \tau) u_{k, t} 
\end{equation}
for $\tau \in (0, 1)$, usually chosen close to 1. This property ensures that the sequence of iterates remains within bounds.

\begin{remark}
    By Assumption \ref{ass:difflip}, Lemma \ref{lem:boundedvar} and \eqref{eq:forwardls}, there exists some $\alpha_k$ small enough such that \eqref{eq:fractobound} is satisfied.
\end{remark}

To address the second consideration, we show below in Theorem \ref{thm:boundedawayipm} that the iterates generated by the extended version of Algorithm 1 are bounded away from the bounds. First, we remark that it is necessary to assume that $s_L$ is chosen to be 1 to establish this result. This follows from Remark 6 in~\cite{wachterglobal}, which shows that when $s_L=1$, the barrier objective function and the norm of its gradients do not need to be bounded above (relaxing Assumption \ref{ass:difflip}) to establish global convergence. For later reference, we restate the linear system to be solved for the update rule pertaining to the barrier method:
\begin{equation}\label{eq:bwlinearipm}
    \begin{bmatrix}
        H_{k, t} + \mu U_{k, t}^{-2}  & A_{k, t} \\
        A_{k, t}^\top & 0 
    \end{bmatrix}
    \begin{bmatrix}
        \zeta_{k, t} & \beta_{k, t} \\ \psi_{k, t} & \omega_{k, t}
    \end{bmatrix} = -\begin{bmatrix}
         (Q_u^{k, t} - \mu e^\top U_{k, t}^{-1})^\top & B_{k, t} \\ c^{k, t} & c_x^{k, t}
     \end{bmatrix}
\end{equation}
where $U_{k, t} = \diag(u_{k, t})$. We remark that we can replace the primal barrier term $\mu U_{k, t}^{-2}$ in the top left block of \eqref{eq:bwlinearipm} with a primal-dual equivalent $\Sigma_{k, t} \coloneqq U_{k, t}^{-1}Z_{k, t}$, with dual variables $z_{k, t} > 0$, as long as there exists $m_\Sigma > 1$ such that
\begin{equation}
    \frac{1}{m_\Sigma}\mu \leq u_{k, t}^{(i)} z_{k, t}^{(i)} \leq m_\Sigma \mu,
\end{equation}
for all $i\in[n_u]$, $t \in [N]$ and $k$. We now state the assumptions required to establish convergence of the barrier interior point algorithm:

\begin{assumptionsb*} Let $\{\mathbf{w}_{k}\}$ be the sequence generated by Algorithm 1 adapted to the barrier problem, from starting point $\mathbf{w}_0$, where we assume that the feasibility restoration phase in step 9 always terminates successfully with $\mathbf{w}$ for which $\mathbf{u} > 0$, and that the algorithm does not stop in step 2 at a KKT point.
    \begin{enumerate}[label=(B{\arabic*})]
        \item There exists an open set $\mathcal{C}$ which contains $(x_{k, t}^+(\alpha), u^+_{k, t}(\alpha))$ for all $k\notin \mathcal{R}_\text{inc}$, $t\in[N]$ and $\alpha\in[0, 1]$, such that $\ell, f$ and $c$ are differentiable over $\mathcal{C}$ and furthermore, that their function values and derivatives are bounded and Lipschitz continuous over $\mathcal{C}$. \label{ass:difflipipm}
        \item The Hessian approximations $H_{k,t}$, $B_{k,t}$ and $C_{k,t}$ are uniformly bounded for all $k \notin \mathcal{R}_\text{inc}$ and $t\in[N]$. \label{ass:Hboundedipm}
        \item The Hessian approximations $H_{k,t} + \mu U_{k, t}^{-2}$ are uniformly positive definite on the null space of the Jacobian $A_{k,t}^\top$, i.e., there exists a constant $M_H > 0$ so that for all $k\notin\mathcal{R}_\text{inc}$ and $t\in[N]$,
        \begin{equation}\label{eq:HuuPDipm}
            \lambda_{\min}\left(Z_{k,t}^\top \left(H_{k, t} + \mu U_{k, t}^{-2} \right) Z_{k,t} \right) \geq M_H,
        \end{equation}
        where the columns of $Z_{k,t}\in\mathbb{R}^{n_u \times (n_u - n_c)}$ form an orthogonal basis for the null space of constraint Jacobian $A_{k,t}^\top$. \label{ass:HuuPDipm}
        \item There exists a constant $M_A > 0$ so that for all $k \notin \mathcal{R}_\text{inc}$ and $t\in[N]$, we have
        \begin{equation}\label{eq:minsingAipm}
            \sigma_{\min}(A_{k,t}) \geq M_A.
        \end{equation}\label{ass:minsingAipm}
        \item The iterates for which the restoration phase is invoked from step \ref{alg:restofrombw} (for example because \eqref{eq:HuuPDipm} or \eqref{eq:minsingAipm} are violated for any $t\in[N]$) are not arbitrarily close to the feasible region, i.e., there exists a constant $\theta_\text{inc} > 0$ so that $k\notin \mathcal{R}_\text{inc}$ whenever $\theta(\mathbf{w}_{k}) \leq \theta_\text{inc}$. \label{ass:restofeasipm}
        \item The sequence of iterates $\{\mathbf{w}_{k}\}$ are uniformly bounded. \label{ass:boundediteratesipm}
        \item At all feasible limit points $\bar{\mathbf{u}}$ of $\{\mathbf{u}_k\}$, with corresponding state trajectory $\bar{\mathbf{x}}$, the gradients of the active constraints, 
        \begin{equation}
            \nabla_{u_t} c(\bar{x}_t, \bar{u}_t)^{(1)}, \dots, \nabla_{u_t} c(\bar{x}_t, \bar{u}_t)^{(n_c)}, \, \text{ and } \,  e_i \,  \text{ for } \,  i \in \{j : \bar{u}_t^{(j)} = 0\},
        \end{equation}
        are linearly independent, where $e_i$ denotes a vector with a 1 in element $i$ and zeros elsewhere. \label{ass:independentgrad}
        \item There exist constants $\tilde{\delta}_\theta, \tilde{\delta}_u > 0$ so that whenever the restoration phase is called in step 9 in an iteration $k \in \mathcal{R}$ with $\theta(\mathbf{w}_k) \leq \tilde{\delta}_\theta$, it returns a new iterate $\mathbf{w}_{k+1}$, where $u_{k+1, t}^{(i)} \geq u_{k, t}^{(i)}$ for all $t, i$ satisfying $u_{k, t}^{(i)} \leq \tilde{\delta}_u$.
    \end{enumerate}
\end{assumptionsb*}

\begin{remark}
    Assumptions B adapt Assumptions G for the barrier interior point problem in an almost identical way to the extension presented in~\cite{wachterglobal} section 4.3 for the general NLP setting. A detailed discussion around the reasonability of the assumptions is provided in section 4.3 of~\cite{wachterglobal}. 
\end{remark}

The remainder of this section is focused on establishing the following result, which is equivalent to Theorem 3 in~\cite{wachterglobal}.

\begin{theorem}\label{thm:boundedawayipm}
    Suppose Assumptions B hold. Then there exists a constant $\epsilon_u > 0$ so that $\mathbf{u}_k \geq \epsilon_u e$ for all $k$.
\end{theorem}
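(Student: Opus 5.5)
We will follow the proof of Theorem 3 in~\cite{wachterglobal}, which argues by contradiction: suppose a subsequence $\{k_i\}$ and an index pair $(t, i)$ exist with $u_{k_i, t}^{(i)} \to 0$. By \ref{ass:boundediteratesipm} we pass to a further subsequence with $\mathbf{w}_{k_i} \to \bar{\mathbf{w}}$. The argument has three ingredients.
\begin{enumerate}
\item The barrier term forces the filter measure $\mathcal{L}$ to blow up as $u \to 0$.
\item The Armijo condition~\eqref{eq:filterarmijo} and the filter make $\mathcal{L}$ bounded above along iterates, using Remark 6 in~\cite{wachterglobal} with $s_\mathcal{L}=1$.
\item Near the boundary the DDP step pushes small components of $u$ away from zero.
\end{enumerate}
Since the global argument of~\cite{wachterglobal} uses the Newton step only through a few local properties, we concentrate on re-establishing those properties for the backward/forward procedure.

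\textbf{Step 1 (structure near an active bound).} Write the top block row of~\eqref{eq:bwlinearipm} as
\[
(H_{k,t} + \mu U_{k,t}^{-2})\zeta_{k,t} + A_{k,t}\psi_{k,t} = -(Q_u^{k,t})^\top + \mu U_{k,t}^{-1}e,
\]
and similarly $(H_{k,t}+\mu U_{k,t}^{-2})\beta_{k,t} + A_{k,t}\omega_{k,t} = -B_{k,t}$. We use \ref{ass:independentgrad}, near a feasible limit point, to show that the KKT matrix restricted to the inactive components together with $A_{k,t}$ has a uniformly bounded inverse. Then for components with $u_{k,t}^{(i)}$ small, multiplying through by $U_{k,t}^2$ yields
\[
\zeta_{k,t}^{(i)} = u_{k,t}^{(i)} + O\bigl((u_{k,t}^{(i)})^2\bigr) \quad \text{and} \quad \beta_{k,t}^{(i)} = O\bigl((u_{k,t}^{(i)})^2\bigr).
\]
The key point is that the feedback gain $\beta$ vanishes quadratically in the small components, because $\mu U^{-2}$ dominates the row. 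This is the analogue of the Newton-step estimate $d^{(i)} \approx u^{(i)}$ in~\cite{wachterglobal}. The backward induction of Lemma~\ref{lem:boundedvar} carries over, except that $P_{k,t}$ and $r_{k,t}$ must now be bounded despite the $\mu U^{-2}$ terms. Boundedness holds because $\beta^\top(\mu U^{-2})\beta = O(u^2)$, so the unbounded entries cancel.

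\textbf{Step 2 (forward pass preserves the push).} From~\eqref{eq:uforwardls},
\[
u_{k,t}^{+,(i)}(\alpha) = u_{k,t}^{(i)} + \alpha\zeta_{k,t}^{(i)} + \beta_{k,t}^{(i)}(x^+_{k,t}(\alpha) - x_{k,t}).
\]
Since $\|x^+_{k,t}(\alpha) - x_{k,t}\| = O(\alpha)$ by Lipschitz continuity and the bounds of Lemma~\ref{lem:boundedvar}, Step 1 gives
\[
u_{k,t}^{+,(i)}(\alpha) \geq u_{k,t}^{(i)}(1 + \alpha/2)
\]
whenever $u_{k,t}^{(i)} \leq \tilde{\delta}$. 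So small components never decrease in a regular iteration, and by B8 they do not decrease in restoration iterations either.

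\textbf{Step 3 (conclusion).} Fix $\tilde{\delta}$ small. A small component can only fall below a threshold $\epsilon_u < \tilde{\delta}$ if it is first above $\tilde{\delta}$ and then drops in a single step. The fraction-to-the-boundary rule~\eqref{eq:fractobound} bounds such a step below by $(1-\tau)\tilde{\delta}$. Steps 1 and 2 then forbid further decrease, so $u_{k,t}^{(i)} \geq \epsilon_u := \min\{(1-\tau)\tilde{\delta}, \min_{t,i} u_{0,t}^{(i)}\}$ for all $k$.

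The main obstacle is Step 1. We must show that the Schur-complement-like recursion keeps $P_{k,t}$ bounded and that $\beta^{(i)}$ is genuinely $O(u^2)$ uniformly in $k$ as the barrier Hessian blows up. This needs \ref{ass:independentgrad} to be applied at limit points, so the argument must be made along convergent subsequences. We also need to handle the constraint rows of $c_x$ coupling into $\omega$; for that we would use a reduced-space decomposition with the active indices removed.
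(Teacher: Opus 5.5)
Your Steps 1--2 are essentially Lemma~\ref{lem:ipm}. Your per-component bounds $\zeta_{k,t}^{(i)}=u_{k,t}^{(i)}+O((u_{k,t}^{(i)})^2)$ and $\beta_{k,t}^{(i)}=O((u_{k,t}^{(i)})^2)$, together with $\|x^+_{k,t}(\alpha)-x_{k,t}\|=O(\alpha)$, are in fact what is needed to make the last inequality in \eqref{eq:movesup} hold for small $\alpha_k$. The globalization in Step 3, however, has two genuine gaps.

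First, Steps 1--2 are only valid at sufficiently feasible iterates. \ref{ass:independentgrad} holds only at feasible limit points, so uniform invertibility of the scaled system requires $\theta(\mathbf{w}_k)\le\delta_\theta$, and (B8) only applies when $\theta(\mathbf{w}_k)\le\tilde\delta_\theta$. You therefore need $\theta(\mathbf{w}_k)\to 0$ for the barrier problem before you know that $\mathbf{u}_k$ stays away from zero, i.e.\ while \ref{ass:difflip} fails for $\varphi_\mu$. For the same reason, Lemma~\ref{lem:boundedvar}, which you cite in Step 2, is not yet available; the bounds must come from the scaled system itself. In the paper's route this is exactly what $s_\mathcal{L}=1$ and Remark 6 of~\cite{wachterglobal} provide: the feasibility half of Theorem~\ref{thm:global}, without upper bounds on the barrier objective and its gradient. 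You instead use Remark 6 to claim that the filter keeps $\mathcal{L}$ bounded above along the iterates. The filter does not do this: a step accepted through \eqref{eq:filtersufficienttheta} may increase $\mathcal{L}$. Ingredients 1--2 also play no role afterwards. As a result your $\epsilon_u$ is wrong as stated. Until $\theta(\mathbf{w}_k)$ is small, a component can shrink by the factor $1-\tau$ in every iteration. The constant must therefore involve $\min_{k\le K}\min_{t,i}u_{k,t}^{(i)}$, for an index $K$ beyond which $\theta(\mathbf{w}_k)$ is below all relevant thresholds, not just $\mathbf{u}_0$.

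Second, Step 2 claims the push for every component with $u_{k,t}^{(i)}\le\tilde\delta$, irrespective of the other components, and Step 3 rests entirely on this single threshold. Nothing in Step 1 makes the constants uniform. They depend on a lower bound $\delta_l$ on the components treated as inactive: the block $H^{ll}_{k,t}+\mu(U^l_{k,t})^{-2}$ and the term $\mu(U^l_{k,t})^{-1}e$ grow like $\delta_l^{-2}$ and $\delta_l^{-1}$. So the threshold below which $\zeta_{k,t}^{(i)}\ge u_{k,t}^{(i)}/2$ holds must be chosen after $\delta_l$, and in general much smaller than it. With one threshold playing both roles, the estimates do not close. This is why Lemma~\ref{lem:ipm} is stated on the set $G$ with $\delta_s,\delta_\theta$ depending on $(\mathcal{S},\delta_l)$. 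It is also why the paper still relies on the argument of Theorem 3 in~\cite{wachterglobal} to turn the lemma into a uniform bound.

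One way to carry this out is as follows. Take $\delta^{(0)}$ above the bound in \ref{ass:boundediteratesipm}. For $j=0,\dots,Nn_u$, choose $\delta^{(j+1)}\in(0,\delta^{(j)})$ with $\delta^{(j+1)}\le\min_{\mathcal{S}}\delta_s(\mathcal{S},\delta^{(j)})$. At every iterate, one of the $Nn_u+1$ bands $(\delta^{(j+1)},\delta^{(j)}]$ contains no component of $\mathbf{u}_k$. This yields a set $\mathcal{S}$ and $\delta_l=\delta^{(j)}$ to which Lemma~\ref{lem:ipm} applies. Hence in every regular iteration with $k\ge K$, the components in $\mathcal{S}$ do not decrease, while the others stay above $(1-\tau)\delta^{(Nn_u)}$. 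Alternatively, a compactness argument over the feasible limit set using \ref{ass:independentgrad} could fix one partition threshold, followed by a smaller push threshold. Either way some two-scale argument is needed, and your proposal does not supply one.
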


A result of Theorem \ref{thm:boundedawayipm} is that the barrier objective and its derivatives are bounded across the sequence of iterates generated by the extended algorithm, implying Assumptions G hold for the barrier problem \eqref{eq:barrierproblems} and thus, the global convergence result of Theorem \ref{thm:global} applies to the interior point extension described in this section. We further remark there exists some $\alpha_k^{\max} \in (0, 1]$ bounded away from zero which satisfies \eqref{eq:fractobound}, which is necessary for the global convergence result in Theorem \ref{thm:global} (see~\cite{wachterglobal} for more details). To show this, we apply Taylor's theorem to \eqref{eq:uforwardls}, yielding
\begin{equation}
    u_{k, t}^+(\alpha) = u_{k, t} + \alpha \frac{d}{d\alpha}u_{k, t}^+(0) + O(\alpha^2),
\end{equation}
noting that $\frac{d}{d\alpha}u_{k, t}^+(0)$ is uniformly bounded for all $k$ and $t$ by Assumption \ref{ass:difflipipm} and Lemma \ref{lem:boundedvar}. The result then follows from \eqref{eq:fractobound}. Before we prove Theorem \ref{thm:boundedawayipm}, we make use of the following intermediate result which is equivalent to Lemma 11 in~\cite{wachterglobal}, adapted for the FilterDDP algorithm.

\begin{lemma}\label{lem:ipm}
    Suppose Assumptions B hold. Then for a given subset $\mathcal{S} \subseteq [N]\times [n_u]$ and a constant $\delta_l > 0$, there exist $\delta_s, \delta_\theta > 0$ so that $u_{k+1, t}^{(i)} - u_{k, t}^{(i)} \geq 0$ for $(t, i) \in \mathcal{S}$ whenever $k \notin \mathcal{R}$ and
    \begin{equation}\label{eq:impG}
        \begin{gathered}
            \mathbf{w}_k  \in G \coloneqq \biggl\{ \mathbf{w} : \mathbf{u} \geq 0, x_{t+1} = f(x_t, u_t) \, \text{ for } \, t \in [N-1],  \\ u_t^{(i)} \leq \delta_s \text{ for } (t, i)\in\mathcal{S}, u^{(i)}_t \geq \delta_l 
        \text{ for } (t, i) \notin \mathcal{S}, \theta(\mathbf{w}) \leq \delta_\theta \biggr\};
        \end{gathered}
    \end{equation}
    i.e., at sufficiently feasible points, the update rule moves the iterate away from almost active bounds.
\end{lemma}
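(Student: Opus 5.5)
We follow the proof of Lemma 11 in~\cite{wachterglobal}, adapted to the DDP step. The key quantity is the change $u_{k+1,t}-u_{k,t} = \alpha_k\zeta_{k,t} + \beta_{k,t}(x_{k+1,t}-x_{k,t})$ from \eqref{eq:uforwardls}. So it suffices to show two things. First, $\zeta_{k,t}^{(i)}$ is strictly positive and bounded away from zero relative to $\alpha_k$ for $(t,i)\in\mathcal{S}$. Second, the feedback term $\beta_{k,t}(x_{k+1,t}-x_{k,t})$ is dominated by $\alpha_k\zeta_{k,t}^{(i)}$.

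\textbf{Step 1: barrier dominance.} We argue by contradiction, supposing the claim fails along a sequence of iterates in $G$ with $\delta_s,\delta_\theta\to 0$. For $(t,i)\in\mathcal{S}$, the barrier gradient $-\mu/u_{k,t}^{(i)}$ and curvature $\mu/(u_{k,t}^{(i)})^2$ in \eqref{eq:bwlinearipm} blow up, while all other data stay bounded. This uses \ref{ass:difflipipm}, \ref{ass:Hboundedipm}, the boundedness in \ref{ass:boundediteratesipm}, $u^{(j)}\ge\delta_l$ off $\mathcal{S}$, and Lemma~\ref{lem:boundedvar}, whose inductive argument carries over to $r_{k,t},P_{k,t}$.

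Pass to a feasible limit point. By \ref{ass:independentgrad}, the rows of $A_{k,t}^\top$ together with $e_i$, $i\in\mathcal{S}_t$, are uniformly linearly independent near it. Fix $t$ and let $D=\mu U_{k,t}^{-2}$ and $g=(Q_u^{k,t})^\top-\mu U_{k,t}^{-1}e$. The $\zeta$-column of \eqref{eq:bwlinearipm} gives
\[
\zeta_{k,t}^{(i)} = \frac{u_{k,t}^{(i)}}{\mu}\bigl(u_{k,t}^{(i)}\cdot\text{bounded} \; + \; \mu/u_{k,t}^{(i)} \bigr)\cdot u_{k,t}^{(i)}\text{-scaled},
\]
that is, after the scaling argument of~\cite{wachterglobal}, $\zeta_{k,t}^{(i)} = u_{k,t}^{(i)}(1+o(1))$ as $u_{k,t}^{(i)}\to 0$ and $\|c^{k,t}\|\to 0$. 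Concretely, we write the system in the scaled variable $\tilde\zeta = U^{-1}\zeta$ on $\mathcal{S}_t$. The scaled matrix then converges to a nonsingular one by \ref{ass:independentgrad} and \ref{ass:HuuPDipm}, and its limiting solution has $\tilde\zeta^{(i)}=1$ on $\mathcal{S}_t$. The same scaling applied to the $\beta$-column, whose right-hand side $B_{k,t}, c_x^{k,t}$ carries no barrier term, gives $\beta_{k,t}^{(i,\cdot)} = O(u_{k,t}^{(i)})$ for $i\in\mathcal{S}_t$. Hence $\zeta^{(i)}_{k,t}\ge \tfrac12 u^{(i)}_{k,t}$ and $|\beta^{(i,\cdot)}_{k,t}|\le M u^{(i)}_{k,t}$ once $\delta_s,\delta_\theta$ are small.

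\textbf{Step 2: control the state drift.} By induction over $t$ through \eqref{eq:forwardls}, with $x_{k+1,1}=x_{k,1}=\hat x_1$ and bounded $\zeta,\beta$ from Lemma~\ref{lem:boundedvar}, we get $\|x_{k+1,t}-x_{k,t}\|\le M_x\alpha_k$. Then
\[
u_{k+1,t}^{(i)}-u_{k,t}^{(i)} \ge \alpha_k u^{(i)}_{k,t}\bigl(\tfrac12 - M M_x\bigr).
\]
This is not yet enough unless $MM_x<\tfrac12$. The first refinement to try is a sharper bound $\|x_{k+1,t}-x_{k,t}\|\le \alpha_k\cdot$(a quantity that vanishes as $\delta_s,\delta_\theta\to 0$), using that $\zeta_{k,t}$ is small on $\mathcal{S}$. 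If that fails, use the second-order form: the $\beta$-row is $O(u^{(i)}_{k,t})$ with a constant that itself shrinks in the scaled limit. The limiting scaled $\beta$ solves a homogeneous-on-$\mathcal{S}$ system, which forces its $\mathcal{S}$-rows to be $o(u^{(i)})$. I will aim to prove this $o(\cdot)$ statement. With it, the bracket becomes $\tfrac12-o(1)>0$ and the conclusion follows for all $k\notin\mathcal{R}$ with $\mathbf{w}_k\in G$.

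\textbf{Main obstacle.} The hardest step is the uniform scaled-limit argument in Step 1, in particular for the $\beta$-column. We need to show that the feedback gains on nearly active components vanish faster than $u^{(i)}_{k,t}$, which is what makes the DDP forward simulation behave like the plain Newton step treated in~\cite{wachterglobal}. This must hold uniformly over $t$, since $P_{k,t+1}$ depends on later stages. Here the bounded recursion of Lemma~\ref{lem:boundedvar} and the invertibility of the limiting scaled KKT matrix are the crucial inputs.
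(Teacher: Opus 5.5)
\textbf{Verdict: there is a gap.} Your route is the paper's route. You scale the $\mathcal{S}$-rows and columns of \eqref{eq:bwlinearipm} by $U^s_{k,t}$ and note that the scaled matrix is $\mathrm{blockdiag}(\mu I, K^{ll}_{k,t}) + O(\delta_s)$, where $K^{ll}_{k,t}$ is the KKT matrix in the $l$-variables and is uniformly invertible by \ref{ass:independentgrad} and \ref{ass:HuuPDipm}. You then read off $\tilde\zeta^s_{k,t} = e + O(\delta_s)$.

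The gap is in the $\beta$-column, which is where the lemma is decided. You state that the scaling gives only $\beta^{(i,\cdot)}_{k,t} = O(u^{(i)}_{k,t})$. You correctly see that this leaves the sign of $\tfrac12 - MM_x$ undetermined, and then you leave the needed $o(u^{(i)}_{k,t})$ bound as something you ``will aim to prove''. As written, the proof is therefore not finished. Your one-line hint, that the limiting scaled $\beta$-system is homogeneous on $\mathcal{S}$, is the right mechanism, and it is short to carry out in the system you already set up, \eqref{eq:bwlinearipmscaled}:
\begin{itemize}
\item The $\mathcal{S}$-block of the right-hand side of the $\beta$-column is $-U^s_{k,t}B^s_{k,t} = O(\delta_s)$. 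Unlike the $\zeta$-column, there is no $-\mu e$ term that survives.
\item The couplings $U^s_{k,t}H^{ss}_{k,t}U^s_{k,t}$, $U^s_{k,t}H^{sl}_{k,t}$ and $U^s_{k,t}A^s_{k,t}$ are all $O(\delta_s)$.
\item The scaled solution is uniformly bounded, so the first block row gives $\mu\tilde\beta^s_{k,t} = O(\delta_s)$.
\end{itemize}
Hence each $\mathcal{S}$-row of $\beta_{k,t}$ is $u^{(i)}_{k,t}\,O(\delta_s)$. This is exactly the paper's $\tilde\beta^s_{k,t} = O(\delta_s)$.

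Your ``first refinement'' is a dead end. $\zeta^l_{k,t}$ is not forced to be small, so $\|x_{k+1,t}-x_{k,t}\|$ is in general of order $\alpha_k$ and cannot be made $o(\alpha_k)$ by shrinking $\delta_s$ and $\delta_\theta$.

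With that estimate, your Step 2 finishes the proof, and its multiplicative bookkeeping is the right way to do it. Combining $\zeta^{(i)}_{k,t} = u^{(i)}_{k,t}(1+O(\delta_s))$, the row bound above, and $\|x_{k+1,t}-x_{k,t}\|\le M_x\alpha_k$ gives
\[
u^{(i)}_{k+1,t}-u^{(i)}_{k,t} = \alpha_k u^{(i)}_{k,t}\bigl(1+O(\delta_s)\bigr) \ge 0 .
\]
By contrast, the additive $O(\delta_s^2)$ errors in the paper's \eqref{eq:movesup} do not by themselves fix a sign when $u^{(i)}_{k,t} \ll \delta_s^2$.

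Two smaller repairs are needed:
\begin{enumerate}
\item Delete the displayed formula for $\zeta^{(i)}_{k,t}$ in Step 1; it is not meaningful.
\item Do not invoke Lemma~\ref{lem:boundedvar} as stated. It assumes Assumptions G, which the barrier objective violates near the bounds, so using it here would be circular. Instead, run one backward induction over $t$ that proves the scaled estimates together with boundedness of $P_{k,t}$ and $r_{k,t}$. On $\mathcal{S}$ the barrier enters \eqref{eq:vxxupdate} and \eqref{eq:vxupdate} only through $\mu(\tilde\beta^s_{k,t})^\top\tilde\beta^s_{k,t}$ and $-\mu e^\top\tilde\beta^s_{k,t}$, which stay bounded. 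This also settles the uniformity over $t$ that you flagged as the main obstacle.
\end{enumerate}
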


\begin{proof}
    The proof is a minor adaptation of the proof of Lemma 11 in \cite{wachterglobal}. Let us denote with $u_{k, t}^s$ the components of $u_{k, t}$ in $\mathcal{S}$ and $u_{k, t}^l$ the remaining ones. Without loss of generality, we assume $u_{k, t} = (u_{k, t}^s, u_{k, t}^l)$ for all $t\in[N]$; Similarly, define $A_{k, t}^s$, $A_{k, t}^l$ etc. First, rewrite the linear system \eqref{eq:bwlinearipm} by scaling the first rows and columns by $U_{k, t}^s \coloneqq \mathrm{diag}(u_{k, t}^s)$:
    \begin{equation}\label{eq:bwlinearipmscaled}
        \begin{gathered}
        \begin{bmatrix}
            U_{k, t}^s H_{k, t}^{ss} U_{k, t}^s + \mu I  & U_{k, t}^s H_{k, t}^{sl} & U_{k, t}^s A_{k, t}^s \\
            H_{k, t}^{ls} U_{k, t}^s &  H_{k, t}^{ll} + \mu (U_{k, t}^l)^{-2} & A_{k, t}^l \\
            (A_{k, t}^s)^\top U_{k, t}^s & (A_{k, t}^l)^\top & 0 
        \end{bmatrix} \\
        \begin{bmatrix}
            \tilde{\zeta}_{k, t}^s & \tilde{\beta}_{k, t}^s \\ \zeta_{k, t}^l & \beta_{k, t}^l \\ \psi_{k, t} & \omega_{k, t}
        \end{bmatrix} 
          = -\begin{bmatrix}
             ((Q_u^{k, t})^s U_{k, t}^s - \mu e^\top)^\top & U_{k, t}^s B_{k, t}^s \\ ((Q_u^{k, t})^l - \mu e^\top (U_{k, t}^l)^{-1})^\top & B_{k, t}^l \\c^{k, t} & c_x^{k, t}
         \end{bmatrix},
        \end{gathered}
    \end{equation}
    where we define $\tilde{\zeta}_{k, t}^s \coloneqq (U_{k, t}^s)^{-1}\zeta_{k, t}^s $ and $\tilde{\beta}_{k, t}^s \coloneqq (U_{k, t}^s)^{-1}\beta_{k, t}^s $. For some initial choice of $\delta_s, \delta_\theta > 0$, let $\bar{\mathbf{u}}$ be a feasible point with $\bar{u}_{t}^s = 0$ for all $t\in[N]$. We have from Assumption \ref{ass:independentgrad} that the columns of the matrix
    \begin{equation}
        \begin{bmatrix}
            \nabla_{u_t} c(\bar{x}_t, \bar{u}_t)^s & I \\
            \nabla_{u_t} c(\bar{x}_t, \bar{u}_t)^l & 0
        \end{bmatrix},
    \end{equation}
    and therefore the columns of $\nabla_{u_t}c(\bar{x}_t, \bar{u}_t)^l$ are linearly independent for all $t\in[N]$. Using a compactness argument and Assumption \ref{ass:boundediteratesipm}, we can find a constant $m_\sigma > 0$ so that $\sigma_{\min}(\nabla_{u_t}c(\bar{x}_t, \bar{u}_t)^l) \geq m_\sigma$ for all feasible limit points $\bar{\mathbf{u}}$ of $\{\mathbf{u}_k\}$ with corresponding state trajectory $\bar{\mathbf{x}}$ when $\bar{u}^s_t = 0$. Therefore, we have from Assumption \ref{ass:difflipipm} that $\sigma_{\min}(A_{k, t}^l) \geq m_\sigma / 2$ for all $\mathbf{w}_k \in G$ if $\delta_\theta$ and $\delta_s$ are chosen sufficiently small.

    In addition, possibly after further decreasing $\delta_\theta$, it follows from Assumptions \ref{ass:HuuPDipm} and \ref{ass:restofeasipm} that for all $\mathbf{w}_k \in G$, the projection of $H^{ll}_{k, t} + \mu (U_{k, t}^l)^{-2}$ into the null space of $(A_{k, t}^l)^\top$ is uniformly positive definite. Together with the boundedness assumptions \ref{ass:difflipipm} and \ref{ass:Hboundedipm}, we see that \eqref{eq:bwlinearipmscaled} satisfies
    \begin{equation}\label{eq:bwlinearipmscaled1}
        \begin{gathered}
        \left(
        \begin{bmatrix}
        \mu I & 0 & 0 \\
        0 & H_{k, t}^{ll} + \mu (U_{k, t}^l)^{-2} & A_{k, t}^l \\
        0 & (A_{k, t}^l)^\top & 0
        \end{bmatrix}
        + O(\delta_s) \right) \begin{bmatrix}
            \tilde{\zeta}_{k, t}^s & \tilde{\beta}_{k, t}^s \\ \zeta_{k, t}^l & \beta_{k, t}^l \\ \psi_{k, t} & \omega_{k, t}
        \end{bmatrix} \\
         = - \begin{bmatrix}
            -\mu e & 0 \\
            ((Q_u^{k, t})^l - \mu e^\top (U_{k, t}^l)^{-1})^\top & B_{k, t}^l \\
            c^{k, t} & c_x^{k, t}
        \end{bmatrix} + O(\delta_s),
        \end{gathered}
    \end{equation}
    for $\mathbf{w}_k \in G$. Note, the inverse of the matrix in the square brackets in the left-hand side of \eqref{eq:bwlinearipmscaled1}, as well as the right-hand side are uniformly bounded for $\delta_s$ sufficiently small. Therefore, for $\mathbf{w}_k \in G$, we have that
    $\tilde{\zeta}_{k, t}^s = e + O(\delta_s)$ and $\tilde{\beta}_{k, t}^s = O(\delta_s)$. Finally, multiplying by $U_{k, t}^s$ yields
    \begin{equation}\label{eq:alphabetaipms}
        \zeta_{k, t}^s = u_{k, t}^s + O(\delta_s^2) \quad \text{and} \quad \beta_{k, t}^s = O(\delta_s^2),
    \end{equation}
    and therefore the desired result follows by
    \begin{equation}\label{eq:movesup}
        \begin{array}{ccl}
            u_{k+1, t}^s - u_{k, t}^s & = & \alpha_k \zeta_{k, t}^s + \beta_{k, t}^s(x_{k+1, t} - x_{k, t}) \\
            & \overset{\eqref{eq:alphabetaipms}}{=} & \alpha_k (u_{k, t}^s + O(\delta_s^2)) + O(\delta_s^2) \\
            & \geq &  0,
        \end{array}
    \end{equation}
    for $\delta_s$ sufficiently small.
\end{proof}

To conclude, we remark that the proof of Theorem \ref{thm:boundedawayipm} follows exactly as the proof of Theorem 3 in~\cite{wachterglobal} after replacing Lemma 11 in that paper with Lemma \ref{lem:ipm}. 

\section{Conclusion}

In this article, we established the global convergence of FilterDDP, a line-search filter differential dynamic programming algorithm for solving discrete-time, constrained optimal control problems. The global convergence of the algorithm is established by observing that the DDP backward recursive Newton and forward simulation phases together yield identical properties to a Newton step on the KKT conditions in the context of a global convergence analysis. As a result, the global convergence analysis for the general, nonlinear programming setting~\cite{wachterglobal} can be used directly to establish the global convergence of the FilterDDP algorithm.

An interesting direction for future work is to establish the fast local convergence of FilterDDP without ignoring the step acceptance criteria as in Xu et al.~\cite{filterddp}. This property was established in the general NLP setting by W\"{a}chter and Biegler~\cite{wachterlocal} by including a second-order corrections step. We envision that an analogous second-order corrections step will admit a similar convergence property for the FilterDDP algorithm. Another promising direction for future work involves proposing and establishing the global convergence of a sequential quadratic programming variant of FilterDDP for the general, inequality constrained setting.

\section*{Acknowledgments}
This work was supported by the Australian Research Council under grant DP250101763 and the United States Air Force Office of Scientific Research
under Grant No. FA2386-24-1-4014.

\bibliographystyle{unsrt}
\bibliography{references}
\end{document}